\newcommand{\mybox}{\hfill $\square$}
\newcommand{\+}[1]{\ensuremath{\overset{+}{#1}}}
\newcommand{\pli}[1]{\ensuremath{\overset{+}{\imath}}}
\newcommand{\mi}[1]{\ensuremath{\overset{-}{#1}}}
\newcommand{\arb}[1]{\ensuremath{\overset{\ast}{#1}}}
\newcommand{\sym}{\ensuremath{\mathrm{Sym}}}
\newcommand{\dih}{\ensuremath{\mathrm{Dih}}}
\newcommand{\I}{\ensuremath{\mathcal{I}}}
\newcommand{\J}{\ensuremath{\mathcal{J}}}
\newcommand{\ip}[2]{\ensuremath{\langle{#1},{#2}\rangle}} 
\newcommand{\eps}{\varepsilon}
\newcommand{\stab}{\ensuremath{\mathrm{Stab}}}
\newcommand{\supp}{\ensuremath{\mathrm{supp}}}
\newcommand{\qed}{\hfill $\square$ \\}
\newcommand{\rr}{\mathbb{R}}
\newtheorem{thm}{Theorem}[section]
\newtheorem{lemma}[thm]{Lemma}         
\newtheorem{prop}[thm]{Proposition}
\newtheorem{defn}[thm]{Definition}
\newtheorem{cor}[thm]{Corollary}
\newenvironment{proof}{\paragraph{Proof}}{\mybox}
\begin{document}
    \title{\Large\textbf{On Excess in Finite Coxeter Groups}}
\date{}
      \author{S.B. Hart and P.J. Rowley\thanks{The authors wish to acknowledge partial support for this work from
       the Department of Economics, Mathematics and Statistics at Birkbeck and the Manchester Institute for Mathematical Sciences (MIMS).
      }}
  \maketitle
\vspace*{-10mm}
\begin{abstract} \noindent For a finite Coxeter group $W$ and $w$ an
element of $W$ the \textit{excess} of $w$ is defined to be $e(w) =
\min\{\ell(x) + \ell(y) - \ell(w) \; | \; w=xy, \; x^2 = y^2 = 1\}$
where $\ell$ is the length function on $W$. Here we investigate the
behaviour of $e(w)$, and a related concept reflection excess, when
restricted to standard parabolic subgroups of $W$. Also the set of
involutions inverting $w$ is studied. (MSC2000: 20F55)
\end{abstract}

\section{Introduction}\label{intro}

This paper, continuing the investigations begun in \cite{invprod}
and \cite{zeroex}, studies further properties of excess in Coxeter
groups. First we recall the definition of excess.

Suppose $W$ is a Coxeter group with length function $\ell$, and set
\begin{center} $\mathcal{W} = \{w \in W \mid w = xy$ where $x, y, \in W$ and $x^2
= y^2 = 1\}$.\end{center}

 Then for $w \in \mathcal{W}$, the excess of $w$ is
$$e(w) = \min\{\ell(x) + \ell(y) - \ell(w) \; | \; w=xy, x^2 = y^2 =
1\}.$$

The main result in \cite{invprod} asserts that every element in
$\mathcal{W}$ is $W$-conjugate to an element whose excess is zero.
In a similar vein, \cite{zeroex} shows that if $W$ is a finite
Coxeter group, then every $W$-conjugacy class possesses at least one
element which simultaneously has minimal length in the conjugacy
class and excess equal to zero. The present paper explores other
properties of excess in finite Coxeter groups. So from now on we
assume $W$ is finite. Since every element in a finite Coxeter group
may be written (in possibly many ways) as $xy$ where $x^2 = y^2 =
1$, $e(w)$ is defined for all $w \in W$. Moreover, every element $w
\in W$ may be written as $xy$, where $x^2=y^2=1$ and $L(w) = L(x) +
L(y)$, where $L$ is the reflection length function on $W$. This
latter fact is (essentially) established in Carter \cite{Carter}
(see also Lemma 2.4 of \cite{zeroex}). This leads to the related
notion of reflection excess. For $w \in W$ its reflection excess
$E(w)$ is defined by
$$E(w) = \min\{\ell(x) + \ell(y) - \ell(w) \; | \; w=xy, x^2 = y^2 =
1, L(w) = L(x) + L(y)\}.$$
Clearly $E(w) \geq e(w)$. However $E(w)$ and $e(w)$ can be markedly
different -- see for example Proposition 3.3 of \cite{zeroex}.\\

The first issue we address here is how excess and reflection excess
behave on restriction to standard parabolic subgroups of $W$ -- as
is well-known, such subgroups are Coxeter groups in their own right.
If $W_J$ is a standard parabolic subgroup of $W$ and $w \in W_J$, we
let $e_J(w)$ (respectively $E_J(w)$) be the excess of $w$
(respectively reflection excess of $w$) considered as an element of
$W_J$.\pagebreak

Our main results are as follows.

\begin{thm} \label{parabolic} Let $W_J$ be a standard parabolic subgroup of $W$ and
let $w \in W_J$. Then $E_J(w) = E(w)$.\end{thm}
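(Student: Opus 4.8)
The plan is to prove the two inequalities $E(w)\le E_J(w)$ and $E_J(w)\le E(w)$ separately, the crux being that the reflection-length condition $L(w)=L(x)+L(y)$ forces every admissible decomposition of $w\in W_J$ to live entirely inside $W_J$. Throughout I use that $W$ is finite, so its reflection representation $V$ carries a positive definite $W$-invariant form; I write $\mathrm{Mov}(u)=\mathrm{Im}(1-u)$ and $\mathrm{Fix}(u)=\ker(1-u)$, which for an involution are orthogonal complements, and I set $V_J=\mathrm{span}\{\alpha_s : s\in J\}$, with $\Phi$ the root system and $\Phi_J=\Phi\cap V_J$ that of $W_J$. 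I first record two compatibility facts. For $w\in W_J$ the length of $w$ computed in $W$ equals its length computed in $W_J$ (a standard property of standard parabolics), so $\ell$ is unambiguous. I also claim the reflection lengths agree: since $W_J$ fixes $V_J^{\perp}$ pointwise while $\mathrm{Mov}(u)\subseteq V_J$ for $u\in W_J$, the moved space of $u$ is the same whether $u$ is read in $W$ or in $W_J$, so by Carter's identity $L(u)=\dim\mathrm{Mov}(u)=L_J(u)$.

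Granting these, $E(w)\le E_J(w)$ is immediate: any decomposition $w=xy$ with $x,y$ involutions in $W_J$ and $L_J(w)=L_J(x)+L_J(y)$ is, by the compatibility facts, also a decomposition into involutions of $W$ with $L(w)=L(x)+L(y)$ and the identical value $\ell(x)+\ell(y)-\ell(w)$; hence it is admissible for $E(w)$, and minimising over the smaller family $E_J$ can only give something at least as large.

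For the reverse inequality I would take a decomposition $w=xy$ realising $E(w)$, so $x^2=y^2=1$ and $L(w)=L(x)+L(y)$, and show that necessarily $x,y\in W_J$. The key geometric step uses $\mathrm{Fix}(x)\cap\mathrm{Fix}(y)\subseteq\mathrm{Fix}(w)$: taking codimensions gives $\mathrm{codim}\,\mathrm{Fix}(w)\le\mathrm{codim}(\mathrm{Fix}(x)\cap\mathrm{Fix}(y))\le\mathrm{codim}\,\mathrm{Fix}(x)+\mathrm{codim}\,\mathrm{Fix}(y)=L(x)+L(y)=L(w)=\mathrm{codim}\,\mathrm{Fix}(w)$, so equality holds throughout. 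This forces $\mathrm{Fix}(w)=\mathrm{Fix}(x)\cap\mathrm{Fix}(y)$ and, on passing to orthogonal complements, $\mathrm{Mov}(w)=\mathrm{Mov}(x)\oplus\mathrm{Mov}(y)$; in particular $\mathrm{Mov}(x),\mathrm{Mov}(y)\subseteq\mathrm{Mov}(w)\subseteq V_J$. Writing the involution $x$ in Carter normal form $x=s_{\beta_1}\cdots s_{\beta_k}$ with $\beta_1,\dots,\beta_k$ roots forming a basis of $\mathrm{Mov}(x)\subseteq V_J$, the identity $\Phi\cap V_J=\Phi_J$ gives each $\beta_i\in\Phi_J$, whence $s_{\beta_i}\in W_J$ and so $x\in W_J$; likewise $y\in W_J$. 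Thus the chosen decomposition is admissible for $E_J(w)$ with the same value, giving $E_J(w)\le E(w)$, and combining yields $E_J(w)=E(w)$.

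The main obstacle is exactly this last argument: turning the additivity $L(w)=L(x)+L(y)$ into the membership $x,y\in W_J$. Everything rests on the two inputs $L=\mathrm{codim}\,\mathrm{Fix}$ and $\Phi\cap V_J=\Phi_J$, so I would take care to confirm that Carter's identity (and the normal form with independent roots spanning the moved space) is valid for \emph{all} finite Coxeter groups, including the non-crystallographic types $H_3$, $H_4$ and $I_2(m)$, since it is a statement purely about the Euclidean reflection representation. I would also verify the compatibility statements $\ell_J=\ell$ and $L_J=L$ on $W_J$ cleanly before invoking them, as these underpin both directions.
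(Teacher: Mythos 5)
Your proof is correct and is in essence the paper's own argument: your codimension computation showing that $L(w)=L(x)+L(y)$ forces $\mathrm{Fix}(x)\cap\mathrm{Fix}(y)=\mathrm{Fix}(w)$, hence $\mathrm{Fix}(w)\subseteq\mathrm{Fix}(x)$, is exactly the content of Lemma \ref{basics}(ii) (quoted there from \cite{zeroex}), after which both proofs conclude $x,y\in W_J$ --- the paper dually, by noting that $x$ fixes $U=\mathrm{Fix}(W_J)=V_J^{\perp}$ pointwise and invoking that the pointwise stabilizer of $U$ is precisely $W_J$, you by placing $\mathrm{Mov}(x)$ inside $V_J$ and using $\Phi\cap V_J=\Phi_J$ together with the orthogonal normal form of involutions. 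The standard facts you flag for verification ($\ell_J=\ell$ and $L_J=L$ on $W_J$, and Carter's identity $L=\mathrm{codim}\,\mathrm{Fix}$ with its involution normal form in all finite reflection groups, including $H_3$, $H_4$ and the dihedral types) are indeed valid in that generality, so there is no gap.
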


We remark that the proof of Theorem \ref{parabolic} is very short
and elementary, whereas its sister statement for excess requires a
lengthy case-by-case analysis. More than that there is a shock in
store as we now see.

\begin{thm} \label{parabolicexcess} Let $W_J$ be a standard parabolic subgroup of $W$ and
let $w \in W_J$. If W has no irreducible factors of type $D_n$, then
$e_J(w) = e(w)$.\end{thm}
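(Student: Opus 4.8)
The plan is to recast excess as a count of ``extra'' inversions and then to localise the whole problem inside the parabolic closure of $w$. Write $N(g)=\{\alpha>0:g\alpha<0\}$ for the inversion set of $g\in W$, so that $\ell(g)=|N(g)|$. From the identity $\ell(xw)=\ell(x)+\ell(w)-2|N(x)\cap N(w)|$ (valid for any $x$, using $N(x^{-1})=N(x)$ when $x$ is an involution) one obtains, for every involution $x$ inverting $w$, that $\ell(x)+\ell(xw)-\ell(w)=2|N(x)\setminus N(w)|$; here $y=xw$ is again an involution precisely because $x$ inverts $w$. Hence $e(w)=2\min\{|N(x)\setminus N(w)| : x^2=1,\ xwx=w^{-1}\}$, and $e_J(w)$ is the same minimum taken over $x\in W_J$. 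Since $w\in W_J$ forces $N(w)\subseteq\Phi_J^+$, the bound $e(w)\le e_J(w)$ is immediate, so the entire content is the reverse inequality: every inverting involution of $W$ must be replaceable by one lying in $W_J$ without creating additional inversions outside $N(w)$.

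First I would reduce to $W$ irreducible. If $W=W_1\times\cdots\times W_r$, then $J$, $w$, and every inverting involution split componentwise, inversion sets add, and so both $e$ and $e_J$ are additive over the factors; moreover the hypothesis that no factor has type $D_n$ is inherited by each $W_i$. It therefore suffices to prove $e_J(w)\le e(w)$ when $W$ is irreducible of non-$D$ type.

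The structural engine is the parabolic closure. Let $K=\supp(w)$, so $W_K$ is the smallest standard parabolic containing $w$ and $K\subseteq J$. If $x$ inverts $w$ then $x$ conjugates $\langle w\rangle$, hence its parabolic closure $W_K$, onto itself, so every inverting involution lies in $N_W(W_K)$. By Howlett's description $N_W(W_K)=W_K\rtimes N_K$, where $N_K$ consists of the elements sending $\Delta_K$ to $\Delta_K$, I may write $x=ug$ with $u\in W_K$ and $g\in N_K$ inducing a diagram automorphism $\sigma$ of $K$, the product being length-additive with $N(g)\subseteq\Phi^+\setminus\Phi_K$. When $\sigma=\mathrm{id}$ the element $g$ centralises $W_K$; then $x^2=1$ gives $g^2=1$ and $u^2=1$, the relation $xwx^{-1}=uwu^{-1}=w^{-1}$ shows $u\in W_K\subseteq W_J$ inverts $w$, and $N(x)=N(g)\sqcup N(u)$ yields $|N(u)\setminus N(w)|\le|N(x)\setminus N(w)|$. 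Thus the trivial-automorphism case is settled at once by discarding $g$.

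The crux is the case of nontrivial $\sigma$. Here $x=ug$ inverts $w$ exactly when $u$ conjugates $\sigma(w)=gwg^{-1}$ to $w^{-1}$ (with $g^2=1$ and $\sigma(u)=u^{-1}$), and leaving $W_K$ incurs a fixed ``toll'' of $|N(g)|$ inversions outside $\Phi_K$ in exchange for a possibly smaller in-$K$ contribution $|N(u)\setminus N(w)|$. Since $W_K$ (being a finite Coxeter group) already contains an inverting involution for $w$ by strong reality \cite{zeroex}, the inequality $e_J(w)\le e(w)$ reduces, in the worst case $J=K$, to showing that no nontrivial-$\sigma$ route ever beats the cheapest purely in-$W_K$ route; that is, the toll $|N(g)|$ must always outweigh the in-$K$ saving. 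I expect this comparison to be the main obstacle, and it is precisely where type $D$ intervenes: the order-two automorphism of a $D$-diagram is realisable by an element of very small inversion set, so for suitable $w$ the toll is too cheap and the net excess genuinely drops, giving $e_J(w)>e(w)$. The remaining work is therefore a type-by-type verification over the irreducible non-$D$ types, enumerating the subsets $K\subseteq J$ for which $N_K$ induces a nontrivial diagram automorphism, where the delicate configurations are the type-$D$ parabolics $K$ sitting inside $E_6,E_7,E_8$; for each one I would compute $|N(g)|$ and the best attainable $|N(u)\setminus N(w)|$ using the inversion-set formula and confirm that, absent a $D$-factor in the ambient group, the cheap fork element realising $\sigma$ is unavailable, so no net saving occurs.
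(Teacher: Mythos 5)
Your reformulation $e(w)=2\min\{|N(x)\setminus N(w)| : x\in\mathcal{I}_w\}$ and the reduction to irreducible factors are both correct, but the structural engine that everything else rests on is false. An involution inverting $w$ normalizes the \emph{parabolic closure} of $w$, i.e.\ the smallest parabolic subgroup (not necessarily standard) containing $w$; in general this is a non-standard parabolic strictly smaller than $W_K$, $K=\supp(w)$, and normalizing it does not force normalization of $W_K$. Concretely, in $W=W(A_3)\cong\sym(4)$ take $w=(1\,3)=r_1r_2r_1$, so $K=\{r_1,r_2\}$ and $W_K=\sym(\{1,2,3\})$; the involution $x=(1\,3)(2\,4)$ centralizes $w$, hence lies in $\mathcal{I}_w$, yet conjugates $W_K$ onto $\sym(\{1,3,4\})\neq W_K$. (A non-involution instance: $w=(1\,3\,5)$ and $x=(1\,3)(4\,6)$ in $\sym(6)$.) Since $e(w)$ is a minimum over all of $\mathcal{I}_w$, a proof must either handle such $x$ or show they are never optimal; for them your decomposition $x=ug$ with $N(g)\cap\Phi_K=\emptyset$ does not exist, so neither branch of your case division applies. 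Ruling these involutions out of the minimization is genuinely nontrivial: it is exactly what the paper's Lemma \ref{afterthought} and Propositions \ref{supp} and \ref{swapcycle} accomplish, by showing that a spartan pair cannot be simultaneously shortened by a centralizing involution and thereby pinning down its support and $2$-cycles. Nor can you repair the argument by passing to the true parabolic closure: that subgroup is non-standard, so the length additivity $N(x)=N(g)\,\dot\cup\,N(u)\cdot g$ and the diagram-automorphism dichotomy evaporate, and excess is not preserved under conjugating the whole configuration into standard position.

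Second, even where your decomposition does apply, the actual content of the theorem is precisely the step you defer (``I expect this comparison to be the main obstacle'', ``I would compute\dots''). Nontrivial automorphisms of $K$ are induced by $N_K$ not only for $D$-type $K$ inside $E_6$, $E_7$, $E_8$, but potentially whenever $K$ admits a diagram automorphism ($A_k$, $D_k$, $E_6$) or has isomorphic components that can be permuted --- indeed $x=(1\,3)(2\,4)$ above swaps the two $A_1$ factors of $\langle(1\,2),(3\,4)\rangle$ --- so the ``toll versus saving'' comparison would have to be carried out uniformly across the infinite families $A_n$ and $B_n$ as well, and no such argument is given. By contrast, the paper proves for types $A$ and $B$ that every spartan pair for $w\in W_J$ already lies in $W_J$ (Theorems \ref{anexcess} and \ref{bnexcess}, built on Corollary \ref{overlap}), and settles the exceptional types by a {\sc Magma} computation; completing your plan would amount to redoing both from scratch. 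Your intuition for why type $D$ must be excluded (its graph automorphism is realized by elements with very small inversion sets) does match the paper's $D_{12}$ example, but as written the proposal proves the theorem in no case: the trivial-automorphism case rests on the false normalizer claim, and the nontrivial case is only a programme.
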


The assumption that there be no direct factors of type $D_n$ in
Theorem \ref{parabolicexcess} cannot be omitted. In Section
\ref{StParaexcess} we give an example with $W$ of type $D_{12}$,
$W_J$ of type $D_{11}$ and an element $w$ of $W_J$ for which
$e_{J}(w) = 60$ but $e(w) = 46$. However there are a number of
positive results, to be found in Section \ref{StParaexcess}, for $W$
of type
$D_n$ provided we restrict $W_J$. \\

For $w \in W$, the set $\mathcal{I}_w$, which is defined as follows,

$$\I_w = \{x \in W \; | \; x^2 = 1, w^{x} = w^{-1}\}$$
is intimately connected with $e(w)$ and $E(w)$. This is because if
$x$ and $y$ are elements of $W$, with $x^2 = y^2 = 1$, such that $xy
= w$, then $w^x = (xy)^x = yx = w^{-1}$ and similarly $w^y =
w^{-1}$. Therefore $x,
y \in \mathcal{I}_w$.  Thus $\mathcal{I}_w$ is always non-empty.\\

%
%
%


For $X \subseteq W$, we define in Section 2 a certain subset $N(X)$
of the positive roots of $W$. The Coxeter length (or just length) of
$X$, denoted by $\ell(X)$, is defined to be $|N(X)|$ (see
\cite{coxlen}). A consequence of our next theorem is that for all $w
\in W$, $\ell(w) \leq \ell(\mathcal{I}_w)$.

\begin{thm}\label{nwni} For all $w \in W$, $N(w) \subseteq
N(\mathcal{I}_w)$. \end{thm}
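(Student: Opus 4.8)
The plan is to unpack the set-length notation and reduce to a root-by-root statement. Recalling from \cite{coxlen} that $N(X)=\bigcup_{x\in X}N(x)$ for a subset $X\subseteq W$, Theorem~\ref{nwni} is equivalent to the assertion that \emph{every} $\alpha\in N(w)$ is negated by \emph{some} involution $x\in\I_w$, i.e.\ $\alpha\in N(x)$. (I shall work with the convention $N(u)=\{\alpha\in\Phi^+\mid u^{-1}\alpha\in\Phi^-\}$; the other convention is entirely symmetric.) Before attacking this I would record the elementary structure of $\I_w$: since $x^2=1$, the condition $xwx=w^{-1}$ is the same as $(xw)^2=1$, so $x\in\I_w$ exactly when $xw$ is again an involution, giving the factorisation $w=x\,(xw)$ with both factors in $\I_w$. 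Moreover, for $x,x'\in\I_w$ one checks $xx'\in C_W(w)$, and both $C_W(w)$ and $\langle w\rangle$ act on $\I_w$ by conjugation. Thus $\I_w$ sits inside a single coset $x_0\,C_W(w)$, and these conjugation actions are precisely the freedom I will use to \emph{change} the inverting involution as $\alpha$ varies.

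That such variation is unavoidable is clear from the cocycle formula $N(xy)=N(x)\,\triangle\, x\,N(y)$: if $w=xy$ and $\alpha\in N(w)$ with $x\alpha\in\Phi^+$, then it is $x\alpha$, not $\alpha$, that is forced into $N(y)$, so a \emph{single} factorisation need not satisfy $N(w)\subseteq N(x)\cup N(y)$. Accordingly I would prove the root-wise statement by induction on $\ell(w)$. Choose a simple reflection $s$ with $\ell(sws)=\ell(w)-2$; then $z\mapsto szs$ is a bijection $\I_{sws}\to\I_w$ and $s$ permutes $\Phi$. For $\alpha\in N(w)$ with $\alpha\neq\alpha_s$ the root $s\alpha$ lies in $N(sws)$ outside one degenerate case, so the inductive hypothesis yields $z\in\I_{sws}$ with $s\alpha\in N(z)$, and then $x=szs\in\I_w$ negates $\alpha$, again barring the single degenerate possibility $z(s\alpha)=-\alpha_s$. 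The root $\alpha_s$ itself, and the finitely many degenerate roots excluded here, I would cover by a direct construction using the conjugation action of $\langle w\rangle$ and $C_W(w)$ on $\I_w$ noted above.

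The induction descends to an element of minimal length in its $W$-conjugacy class, where no simple reflection reduces the length under conjugation; this is the base case. For such $w$ I would invoke \cite{zeroex}: a minimal-length element admits a zero-excess factorisation $w=xy$ with $x^2=y^2=1$ and $\ell(x)+\ell(y)=\ell(w)$, both factors lying in $\I_w$. Each $\alpha\in N(w)$ is then covered by $x$, by $y$, or—when it falls into the ``bad'' branch of the cocycle—by a conjugate of $x$ or $y$ inside $\I_w$ produced via the centraliser action.

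The main obstacle is exactly this bookkeeping of the ``bad'' roots: those $\alpha\in N(w)$ not directly negated by a transferred or factorisation involution. Handling them uniformly is what forces the argument to exploit the full coset $x_0\,C_W(w)$ rather than any one decomposition, and I expect the degenerate sign-cases in the conjugation step (where $s\alpha$ drops out of $N(sws)$, or where the transferred negated root equals $-\alpha_s$) to require the most care. I would also first test the stronger claim that some \emph{single} $x\in\I_w$ satisfies $N(w)\subseteq N(x)$; it holds in every small case I can compute by hand, but I anticipate it fails in general, which is precisely why the union formulation $N(w)\subseteq N(\I_w)$ is the correct one to prove.
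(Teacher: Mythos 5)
There is a genuine gap, and it sits exactly where the theorem lives. Both halves of your argument---the cyclic-shift inductive step and the zero-excess base case---transfer \emph{most} roots of $N(w)$ and then defer the exceptional ones to ``a direct construction using the conjugation action of $\langle w\rangle$ and $C_W(w)$ on $\I_w$''. These exceptional roots are: the simple root $\alpha_s$; the root sent to $-\alpha_s$ by $w$; the roots for which the involution $z$ supplied by induction happens to send the transferred root to $-\alpha_s$; and, in the base case, fully half of $N(w)$, since a zero-excess factorisation $w=xy$ has $N(x)\cap N(y)=\emptyset$ and so, by Lemma \ref{lengthadd}, $N(w)=N(x)\,\dot\cup\,N(y)\cdot x$, with the second piece covered by neither $x$ nor $y$ in general. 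No argument is given for any of these cases, and producing, for a prescribed root, an element of $\I_w$ negating it \emph{is} the content of Theorem \ref{nwni}; the structural facts you record ($\I_w\subseteq x_0C_W(w)$ and the conjugation actions) do not by themselves deliver it. It is telling that the paper's proof consists entirely of such constructions, done type by type: the identity $N(xw_0)=\Phi^+\setminus N(x)$ (Lemma \ref{centre}) disposes of every $W$ with $-1$ central; explicit involutions $\sigma_\lambda,\sigma'_\lambda$ adapted to each cycle and each root handle type $A_{n-1}$ (Lemma \ref{an}); a careful sign-bookkeeping argument (Lemma \ref{technical}, Proposition \ref{dn}) handles type $D_n$, where one must also keep the inverting involution positive; a cuspidal orbit argument (Proposition \ref{cusplen}) handles the dihedral classes; and $E_6$ is checked by \textsc{Magma}. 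That the authors needed a computer for $E_6$ is strong evidence that the uniform induction you hope for is not readily available; in any case you have not carried it out.

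Two further problems. First, your descent is unjustified: conjugation by a simple reflection changes length by $0$ or $\pm2$, and the Geck--Pfeiffer theorem guarantees only a length \emph{non-increasing} chain of elementary conjugations down to a minimal length element, possibly with length-preserving steps; it does not assert that a strict decrease exists whenever $w$ is non-minimal. So the honest base case of your induction is ``no simple reflection strictly shortens $w$ under conjugation'', which you would have to prove coincides with minimal length in the class (or treat separately) before invoking the zero-excess result of \cite{zeroex}. Second, your closing empirical remark is wrong: the paper's own example $w=(235)$ in $W(A_4)\cong\sym(5)$ is a very small case in which no single $x\in\I_w$ satisfies $N(w)\subseteq N(x)$---though your anticipation that the single-involution strengthening fails in general is correct, and your decision to prove the union form is the right one.
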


This paper is arranged as follows. Our next section gathers together
relevant background material while reviewing much of the standard
notation used for Coxeter groups. Theorems \ref{parabolic} and
\ref{parabolicexcess} are established in Section \ref{StParaexcess};
the former, being an easy consequence of Lemma \ref{basics}(ii), is
proved first. Then Lemma \ref{afterthought} gives criteria for
recognizing when two involutions fail to be a spartan pair (see
Definition \ref{2.3} for the definition of a spartan pair). With
this result to hand we then prove the pivotal Propositions
\ref{supp} and \ref{suppdn}. In fact Propositions \ref{supp} and
\ref{suppdn} mark the parting of the ways for type $A$, $B$ and type
$D$. Corollary \ref{overlap} and Proposition \ref{swapcycle} combine
to pin down the $2$-cycles $(\arb a\; \arb b)$ of spartan pairs.
With this information we are then able to complete, in Theorems
\ref{anexcess} and \ref{bnexcess}, the proof of Theorem
\ref{parabolicexcess}. All is not lost for type $D$, as Theorem
\ref{dnexcess} demonstrates, with
various conditions which guarantee that $e_J(w) = e(w)$.\\

Our final section investigates $N(\mathcal{I}_w)$ for $w \in W$.
Proposition \ref{cusplen} and Lemma \ref{centre} reveal that, under
certain circumstances, $N(\mathcal{I}_w) = \Phi^+$ (though this is
not always the case) and the balance of this section presents a
proof of Theorem \ref{nwni}.

\section{Background Results and Notation}\label{back}

We briefly recall the standard notation used for finite Coxeter
groups $W$ and their root systems. To begin with, by definition, $W$
has a presentation of the form

\[ W = \langle R \; | \; (rs)^{m_{rs}} = 1, r, s \in R\rangle \]
where $m_{rs} = m_{sr} \in \mathbb{N}$, $m_{rr} = 1$ and $m_{rs}
\geq 2$ for $r, s \in R, r\neq s$. We put $R = \{r_1, \dots ,r_n \}$
-- the $r_i$ are called the fundamental reflections of $W$. The
length of an element $w$ of $W$, denoted by $\ell(w)$, is defined to
be
\[\ell(w) = \left\{ \begin{array}{l} \min\{ l \; | \; w=r_{i_1}\cdots r_{i_l}, r_{i_j} \in R\} {\mbox{ if $w \neq 1$}} \\
0 {\mbox{ if $w=1$}.} \end{array}\right.\]

Taking $V$ to be a real euclidean vector space with basis $\Pi = \{
\alpha_r \; | \; r \in R\}$ and norm $|| \;\; ||$, we define a
symmetric bilinear form $\ip{ \; }{ \; }$ on $V$ by

\[\ip{\alpha_r}{\alpha_s} =  - ||\alpha_r|| \; ||\alpha_s|| \cos\left(\frac{\pi}{m_{rs}}\right)
, (r, s \in R).\]

Now for $r, s\in R$ we define
\[ \alpha_s\cdot r = \alpha_s - 2\frac{\ip{\alpha_r}{\alpha_s}}{\ip{\alpha_r}{\alpha_r}}\alpha_r,\]
which extends to an action of $W$ on $V$. This action is faithful
and respects $\ip{ \; }{ \; }$ (see \cite{humphreys}). We remark
that traditionally the action of a Coxeter group on its root system
is on the left, but since in this paper we will largely be working
with permutation groups, which usually act on the right, we have
chosen to act on the right throughout. The following subset of $V$
\[\Phi = \{\alpha_r \cdot w\; | \; r \in R, w \in W\}\]
is the root system of $W$. Setting $\Phi^+ = \{\sum_{r\in
R}\lambda_r\alpha_r \in \Phi \; | \; \lambda_r \geq 0 {\mbox{ for
all $r$}}\}$ and $\Phi^- = -\Phi^+$ we have the fundamental fact
that $\Phi$ is the disjoint union $\Phi^+ \dot\cup \Phi^-$ (see
\cite{humphreys} again), the sets $\Phi^+$ and $\Phi^-$ being
referred to, respectively, as the positive and negative roots of
$\Phi$. Let $\alpha$ be a positive root. Then $\alpha = \alpha_r
\cdot w$ for some $w \in W$ and $r \in R$. Define $r_{\alpha} =
w^{-1}rw$. Then $\alpha \cdot r_{\alpha} = -\alpha$. Such an element
as $r_{\alpha}$ is called a {\em reflection} of $W$.

For $X$ a subset of $W$ we define
\[N(X) = \{ \alpha \in \Phi^+ \; | \; \alpha\cdot w \in \Phi^- {\mbox{ for some $w\in X$}}\}.\]

If $X = \{w\}$, we write $N(w)$ instead of $N(\{w\})$. Clearly $N(X)
= \cup_{w \in X}N(w)$. The Coxeter length of $X$, $\ell(X)$, is
defined to be $\ell(X) = |N(X)|$ -- for more on the Coxeter length
of subsets of Coxeter groups, see \cite{coxlen}. The connection
between $\ell(w)$ and the root system of $W$ is contained in our
next lemma.

\begin{lemma}\label{millie}Let $w \in W$ and $\alpha \in \Phi^+$.
\begin{trivlist}
\item (i) If $\ell(r_{\alpha}w) > \ell(w)$ then $ \alpha\cdot w \in
\Phi^+$ and if $\ell(r_{\alpha}w) < \ell(w)$ then $\alpha \cdot w\in
\Phi^-$. In particular, $\ell(r_{\alpha}w) < \ell(w)$ if and only if
$\alpha \in N(w)$. \item (ii) $\ell(w) = |N(w)|$.
\end{trivlist}
\end{lemma}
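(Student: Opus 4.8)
The statement records the classical interplay between Coxeter length and the root system (given for the left action in \cite{humphreys}); my plan is to transcribe the standard argument into the right-action convention used here. Two geometric facts drive everything. \emph{First}, for each fundamental reflection $r\in R$ with simple root $\alpha_r$, the map $\beta\mapsto\beta\cdot r$ sends $\alpha_r$ to $-\alpha_r$ and permutes $\Phi^+\setminus\{\alpha_r\}$; this is seen by noting that any $\beta\in\Phi^+\setminus\{\alpha_r\}$ has a strictly positive coefficient on some $\alpha_s$ with $s\neq r$, while $\beta\cdot r$ alters only the $\alpha_r$-coefficient, so $\beta\cdot r$ still has a positive coefficient and hence lies in $\Phi^+$. \emph{Second}, the definition $r_\alpha=w^{-1}rw$ for $\alpha=\alpha_r\cdot w$ supplies the conjugation formula $r_{\alpha\cdot u}=u^{-1}r_\alpha u$, which is the only algebraic input needed. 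I will also invoke the sign homomorphism $\epsilon(w)=(-1)^{\ell(w)}$: since $r_\alpha$ is conjugate to a fundamental reflection, $\epsilon(r_\alpha)=-1$, so $\ell(r_\alpha w)$ and $\ell(w)$ always have opposite parity and in particular are never equal.

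For part (i) it therefore suffices to prove the single implication $(\ast)$: if $\alpha\cdot w\in\Phi^-$ then $\ell(r_\alpha w)<\ell(w)$. Indeed, applying $(\ast)$ with $w$ replaced by $r_\alpha w$ and using $\alpha\cdot r_\alpha=-\alpha$ yields the complementary implication $\alpha\cdot w\in\Phi^+\Rightarrow\ell(r_\alpha w)>\ell(w)$, and the parity remark rules out equality, giving both displayed statements together with the ``in particular'' clause (which merely rewrites $\alpha\cdot w\in\Phi^-$ as $\alpha\in N(w)$). To prove $(\ast)$, fix a reduced expression $w=s_1\cdots s_q$ and consider the sequence $\alpha,\ \alpha\cdot s_1,\ \ldots,\ \alpha\cdot s_1\cdots s_q=\alpha\cdot w$. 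As $\alpha\in\Phi^+$ but $\alpha\cdot w\in\Phi^-$, there is a least $k$ with $\alpha\cdot s_1\cdots s_{k-1}\in\Phi^+$ and $\alpha\cdot s_1\cdots s_k\in\Phi^-$; by the first geometric fact the only positive root $s_k$ makes negative is $\alpha_{i_k}$, so $\alpha\cdot s_1\cdots s_{k-1}=\alpha_{i_k}$. The conjugation formula then gives $r_\alpha=(s_1\cdots s_{k-1})\,s_k\,(s_1\cdots s_{k-1})^{-1}$, whence $r_\alpha w=s_1\cdots s_{k-1}s_{k+1}\cdots s_q$ after cancellation, an expression of length at most $q-1$. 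Thus $\ell(r_\alpha w)<\ell(w)$.

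For part (ii) I would induct on $\ell(w)$, the case $w=1$ being trivial. For $w\neq 1$ write $w=sw'$ with $s=s_1$ and $w'=s_2\cdots s_q$ reduced, and set $\beta=\alpha_s$. Since $\ell(sw')>\ell(w')$, part (i) applied to $w'$ gives $\beta\cdot w'\in\Phi^+$, so $\beta\notin N(w')$. Now analyse $N(w)$ through the bijection $\gamma\mapsto\gamma\cdot s$ of $\Phi$: the root $\beta$ lies in $N(w)$ because $\beta\cdot w=(-\beta)\cdot w'=-(\beta\cdot w')\in\Phi^-$, while for $\alpha\in\Phi^+\setminus\{\beta\}$ one has $\alpha\cdot s\in\Phi^+\setminus\{\beta\}$ and $\alpha\in N(w)\iff\alpha\cdot s\in N(w')$. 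As $s$ permutes $\Phi^+\setminus\{\beta\}$ and $\beta\notin N(w')$, the map $\alpha\mapsto\alpha\cdot s$ is a bijection from $N(w)\setminus\{\beta\}$ onto $N(w')$, giving $|N(w)|=1+|N(w')|=1+\ell(w')=\ell(w)$. The main obstacle here is bookkeeping rather than depth: one must run the ``first sign change'' and exchange step of $(\ast)$ cleanly, and keep the right-action conventions consistent, since it is easy to conflate $\alpha\cdot w$ with $\alpha\cdot w^{-1}$ or to peel a descent from the wrong side. Everything else is forced by the two geometric facts above.
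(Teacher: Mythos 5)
Your proof is correct, but it is worth noting that the paper does not actually prove Lemma \ref{millie} at all: its ``proof'' consists of citing Propositions 5.7 and 5.6 of Humphreys \cite{humphreys}, these being classical facts about Coxeter groups. What you have written out is, in essence, the standard textbook argument behind those citations, carefully transcribed into the right-action convention of this paper: for part (i), the ``first sign change'' argument along a reduced word $w=s_1\cdots s_q$, using that a fundamental reflection $s$ makes exactly one positive root (its own simple root) negative, together with the conjugation formula $r_{\alpha\cdot u}=u^{-1}r_\alpha u$ to delete a letter and shorten $r_\alpha w$; for part (ii), induction on $\ell(w)$ via the bijection $\alpha\mapsto\alpha\cdot s$ between $N(w)\setminus\{\alpha_s\}$ and $N(w')$ where $w=sw'$ is a reduced factorization. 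All the steps check out: the reduction of (i) to the single implication $(\ast)$ by replacing $w$ with $r_\alpha w$ is valid since $\alpha\cdot r_\alpha w\in\Phi^-$ when $\alpha\cdot w\in\Phi^+$, and in the inductive step of (ii) you correctly verify $\alpha_s\notin N(w')$ (via part (i)) so that the bijection really lands onto all of $N(w')$. The only inputs you take for granted --- well-definedness of $r_\alpha$, the sign homomorphism, and the fact that every root has all coefficients of one sign --- are standard and consistent with the paper's setup. In short, the paper buys brevity by outsourcing to the literature, as is appropriate for background material in a research article, while your version buys self-containedness and a check that nothing goes wrong under the right-action convention; mathematically they rest on the same argument.
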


\begin{proof} Parts (i) and (ii) are, respectively, Propositions 5.7 and 5.6 of \cite{humphreys}.
\end{proof}

\begin{lemma} \label{lengthadd} Let $g, h \in W$. Then
$$N(gh) = N(g)\setminus [-N(h)\cdot g^{-1}] \cup [N(h)\setminus
N(g^{-1})]\cdot g^{-1}.$$
Hence $\ell(gh) = \ell(g) + \ell(h) -2\mid N(g^{-1})\cap
N(h)\mid$.\end{lemma}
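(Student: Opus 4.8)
The plan is to work straight from the definition $N(gh) = \{\alpha \in \Phi^+ \mid \alpha\cdot gh \in \Phi^-\}$, classifying each positive root $\alpha$ by the sign of the intermediate root $\alpha\cdot g$. For $\alpha \in \Phi^+$ we have $\alpha \in N(gh)$ exactly when $\alpha\cdot gh \in \Phi^-$, and this divides into two mutually exclusive cases according to whether $\alpha \cdot g \in \Phi^+$ (equivalently $\alpha \notin N(g)$) or $\alpha \cdot g \in \Phi^-$ (equivalently $\alpha \in N(g)$), in each instance subject to $(\alpha\cdot g)\cdot h \in \Phi^-$. Because the two cases are separated precisely by membership in $N(g)$, the two resulting subsets of $N(gh)$ are automatically disjoint, a fact I will reuse for the length count.

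In the first case I would set $\beta = \alpha \cdot g \in \Phi^+$; then $\beta\cdot h \in \Phi^-$ says exactly $\beta \in N(h)$, while $\alpha = \beta\cdot g^{-1} \in \Phi^+$ says $\beta \notin N(g^{-1})$. So this case contributes precisely $[N(h)\setminus N(g^{-1})]\cdot g^{-1}$. In the second case I would write $\alpha\cdot g = -\mu$ with $\mu \in \Phi^+$, so that $(\alpha\cdot g)\cdot h \in \Phi^-$ becomes $\mu\cdot h \in \Phi^+$, i.e.\ $\mu \notin N(h)$. Thus from $N(g)$ we must discard exactly those $\alpha$ with $\mu \in N(h)$, and tracing back $\alpha = -\mu\cdot g^{-1}$ identifies the discarded set as $N(g)\cap[-N(h)\cdot g^{-1}]$. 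Combining the two cases yields the stated identity $N(gh) = (N(g)\setminus[-N(h)\cdot g^{-1}])\cup([N(h)\setminus N(g^{-1})]\cdot g^{-1})$.

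For the length formula I would pass to cardinalities, using that right multiplication by $g^{-1}$ is a bijection of $\Phi$ and that $\mu \mapsto -\mu\cdot g^{-1}$ is injective. The first piece has size $|N(g)| - |N(g)\cap[-N(h)\cdot g^{-1}]|$; a brief check shows $-\mu\cdot g^{-1} \in N(g)$ holds iff $\mu\cdot g^{-1}\in \Phi^-$, i.e.\ iff $\mu \in N(g^{-1})$, so this intersection has size $|N(h)\cap N(g^{-1})|$. The second piece has size $|N(h)| - |N(h)\cap N(g^{-1})|$. Adding these, using disjointness, and invoking $\ell(w) = |N(w)|$ from Lemma \ref{millie}(ii) gives $\ell(gh) = \ell(g) + \ell(h) - 2|N(g^{-1})\cap N(h)|$.

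The one genuine obstacle is sign bookkeeping: one must keep straight the three actions $\cdot g$, $\cdot g^{-1}$ and $\cdot h$, and repeatedly apply the equivalence $\alpha \in N(w) \iff \alpha \in \Phi^+ \text{ and } \alpha\cdot w \in \Phi^-$, taking care that the positivity conditions are truly equivalent to the root-theoretic ones rather than merely implied by them. Once the two pieces are correctly pinned down as disjoint subsets and the two counting bijections are in place, the length identity follows immediately.
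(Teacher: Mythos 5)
Your proof is correct, and since the paper does not prove this lemma itself but simply cites Lemma 2.2 of \cite{invprod}, your direct argument --- classifying $\alpha \in \Phi^+$ by the sign of $\alpha\cdot g$, identifying the two disjoint pieces, and counting via the bijection $\mu \mapsto -\mu\cdot g^{-1}$ between $N(h)\cap N(g^{-1})$ and $N(g)\cap[-N(h)\cdot g^{-1}]$ --- is exactly the standard proof of this identity and is essentially the argument of the cited source. No gaps: all sign-bookkeeping equivalences you invoke are genuine equivalences because every root is either positive or negative.
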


\begin{proof} See Lemma 2.2 of \cite{invprod}.
\end{proof}\\

For $J$ a subset of $R$ define $W_J = \langle J \rangle$. Such a
subgroup of $W$ is referred to as a standard parabolic subgroup.
Standard parabolic subgroups are Coxeter groups in their own right
with root system
\[\Phi_J = \{\alpha_{r}\cdot w \; | r \in J, w \in W_J\}\]
(see Section 5.5 of \cite{humphreys} for more on this). A conjugate
of a standard parabolic subgroup is called a parabolic subgroup of
$W$, and a cuspidal element of $W$ is an element not contained in
any proper parabolic subgroup of $W$.

\begin{defn} \label{2.3} Let $w \in W$. We call $(x,y)$ a spartan pair for $w$
if $x, y \in W$, $x^2 = y^2 = 1$, $w = xy$ and $\ell(x) + \ell(y) -
\ell(w) = e(w)$.\end{defn}%
 A consequence of Lemma \ref{lengthadd} is
that if $x, y \in W$ with $x^2=y^2=1$ and $w=xy$, then $(x,y)$ is a
spartan pair for $w$ if and only if $2|N(x)\cap N(y)| = e(w)$.
Letting $V_{\lambda}$ denote the $\lambda$-eigenspace of $V$
($\lambda \in \rr$) we introduce the following subset $\J_w$ of
$\I_w$, $w\in W$.

$$\J_w = \{x \in W \; | \; x^2 = 1, w^{x} = w^{-1}, V_1(w) \subseteq
V_1(x)\}.$$

\begin{lemma}\label{basics} Suppose that  $w \in W$. Then
\begin{trivlist}
\item{(i)}  $e(w)$ is the sum
of the excesses and $E(w)$ is the sum of the reflection excesses of
the projections of $w$ into
the irreducible direct factors of $W$; and%

\item{(ii)} $\J_w$ is the set of $x$ such that $w = xy$ where $x^2 =
y^2 =1$ and $L(w) = L(x) + L(y)$.\end{trivlist}\end{lemma}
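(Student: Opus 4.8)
The plan is to prove the two parts by entirely different means, using throughout the classical fact (Carter \cite{Carter}) that reflection length is a codimension, namely $L(g) = \dim V - \dim V_1(g)$; in particular, for an involution $x$ we have $L(x) = \dim V_{-1}(x)$. For part (i) I would decouple everything over the decomposition $W = W_1 \times \cdots \times W_k$ into irreducible factors, with the associated orthogonal splitting $V = V^{(1)} \perp \cdots \perp V^{(k)}$ and partition $\Phi^+ = \Phi_1^+ \sqcup \cdots \sqcup \Phi_k^+$. Writing $w = (w_1,\dots,w_k)$, one first records that both lengths are additive: $\ell(w) = \sum_i \ell(w_i)$ since $N(w) = \bigsqcup_i N(w_i)$, and $L(w) = \sum_i L(w_i)$ since $V_1(w) = \bigoplus_i V_1(w_i)$. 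Next, $x,y$ are involutions with $w = xy$ exactly when $x = (x_i)$, $y=(y_i)$ with each $x_i,y_i$ an involution and $w_i = x_i y_i$, so involution factorizations of $w$ are precisely tuples of involution factorizations of the $w_i$. As $\ell(x)+\ell(y)-\ell(w) = \sum_i\big(\ell(x_i)+\ell(y_i)-\ell(w_i)\big)$ with each summand independently minimizable, $e(w) = \sum_i e(w_i)$. For reflection excess the only extra point is that the constraint $L(w)=L(x)+L(y)$ decouples: since $L(w_i)\le L(x_i)+L(y_i)$ for each $i$ (subadditivity of reflection length) and $L(w)=\sum_i L(w_i)$, equality in the sum forces equality factorwise; hence the constrained minimization splits and $E(w)=\sum_i E(w_i)$.

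For part (ii) I would first reduce the right-hand set to a subset of $\I_w$. If $w = xy$ with $x^2=y^2=1$, then $y = xw$ and $xwx = w^{-1}$, so $x \in \I_w$; conversely each $x \in \I_w$ gives the involution $y = xw$ with $w = xy$. Thus the right-hand set equals $\{x \in \I_w \mid L(w) = L(x) + L(xw)\}$, and the goal is to show the length identity is equivalent to $V_1(w) \subseteq V_1(x)$. Put $F = V_1(w)$ and let $M = F^{\perp}$. Because $x$ inverts $w$, a short calculation shows $x$ preserves $F$ and hence $M$; the same then holds for $y = xw$.

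The heart is a dimension count. On $F$ the product $xy = w$ acts trivially, so $x$ and $y$ agree there, giving $\dim(V_{-1}(x)\cap F) = \dim(V_{-1}(y)\cap F) =: d$. On $M$ I would split $M = N \perp P$, where $N = V_{-1}(w)$ and $P$ carries the fixed-point-free rotation part of $w$ (no eigenvalue $\pm 1$). On $P$, any involution inverting a fixed-point-free rotation has a $(-1)$-eigenspace of exactly half the dimension (complexify: $x$ swaps the $e^{i\theta}$- and $e^{-i\theta}$-eigenspaces of $w$, which are equidimensional), so $V_{-1}(x)$ and $V_{-1}(y)$ each meet $P$ in dimension $\tfrac12\dim P$. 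On $N$, where $w = -1$, the relation $y = xw$ yields $y|_N = -x|_N$, whence $V_{-1}(x)\cap N = V_1(y)\cap N$ and therefore $\dim(V_{-1}(x)\cap N) + \dim(V_{-1}(y)\cap N) = \dim N$. Adding these, $\dim(V_{-1}(x)\cap M) + \dim(V_{-1}(y)\cap M) = \dim M = L(w)$. Combining with the two equal contributions $d$ over $F$ gives $L(x)+L(y) = L(w) + 2d$. Hence $L(w) = L(x)+L(y)$ if and only if $d = \dim(V_{-1}(x)\cap V_1(w)) = 0$, that is, if and only if $V_1(w) \subseteq V_1(x)$, which is exactly the defining condition for membership in $\J_w$.

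The step I expect to be the main obstacle is the dimension count on $M$, and specifically the need to isolate the $(-1)$-eigenspace $N = V_{-1}(w)$ and handle it separately from the genuine rotations: the naive ``half-dimensional'' contribution holds only on $P$, while on $N$ it is instead the swap $V_{-1}(x)\cap N = V_1(y)\cap N$ that makes the two contributions sum to $\dim N$. Verifying that these two mechanisms combine to give exactly $\dim M$ is where the care lies; everything else is either additivity bookkeeping or a direct invocation of $L(g) = \dim V - \dim V_1(g)$.
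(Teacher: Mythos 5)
Your proof is correct, but it does not follow the paper's route --- most visibly in part (ii). For part (i) you do essentially what the paper does (additivity of $\ell$ and $L$ over the irreducible factors), though you usefully spell out the one point the paper dismisses with ``follows easily'': that the side constraint $L(w)=L(x)+L(y)$ decouples factorwise, because subadditivity $L(w_i)\le L(x_i)+L(y_i)$ turns equality of the sums into termwise equality, so the constrained minimum splits as a sum of constrained minima. For part (ii), however, the paper gives no argument at all: it simply cites Lemma 3.2(i) of \cite{zeroex}. You instead prove it from scratch, via Carter's codimension formula $L(g)=\dim V-\dim V_1(g)$ (stated in \cite{Carter} for Weyl groups, but valid for all finite Coxeter groups --- the paper leans on \cite{Carter} with the same implicit caveat) and the orthogonal decomposition $V = V_1(w)\perp V_{-1}(w)\perp P$. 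Your dimension count is sound: since conjugation by $x$ sends the $\lambda$-eigenspace of $w$ to the $\lambda^{-1}$-eigenspace, both $x$ and $y=xw$ preserve each summand; the swap of the $e^{\pm i\theta}$-eigenspaces forces the half-dimensional contribution on $P$; and $y|_N=-x|_N$ makes the two contributions on $N=V_{-1}(w)$ sum to $\dim N$. This yields the identity $L(x)+L(y)=L(w)+2\dim\left(V_{-1}(x)\cap V_1(w)\right)$, from which the equivalence with $V_1(w)\subseteq V_1(x)$ is immediate. What the paper's citation buys is brevity; what your argument buys is a proof that is self-contained within this paper, plus the quantitative identity above as a by-product (it also shows that membership of $x$ in $\J_w$ is equivalent to the corresponding condition on $y=xw$, which the bare definition does not make obvious). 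The only blemish is notational: in part (i) you write $V_1(w)=\bigoplus_i V_1(w_i)$, where you mean $V_1(w)=\bigoplus_i\left(V_1(w_i)\cap V^{(i)}\right)$; this does not affect the argument.
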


\begin{proof} Since $\ell(w)$, respectively $L(w)$, is the sum of
the lengths, respectively reflection lengths, of the projections of
$w$ into the irreducible direct factors of $W$, (i) follows easily.
For (ii), see Lemma 3.2(i) of \cite{zeroex}.\end{proof}\\

In view of Lemma \ref{basics}(i), irreducible finite Coxeter groups
appear frequently in our proofs. Such groups have been classified by
Coxeter \cite{coxeter} (see also \cite{humphreys}).

\begin{thm}\label{classification} An irreducible finite Coxeter group is either of type
$A_n (n\geq 1)$, $B_n (n \geq 2)$, $D_n (n \geq 4)$, $\dih(2m)$ (a
dihedral group of order $2m$, $m \geq 5$), $E_6$, $E_7$, $E_8$,
$F_4$, $H_3$ or $H_4$.
\end{thm}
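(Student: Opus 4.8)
The plan is to recast the problem geometrically: the group $W$ is finite precisely when the symmetric bilinear form $\ip{\;}{\;}$ on $V$ is positive definite, so the classification reduces to determining all connected Coxeter diagrams whose associated Gram matrix is positive definite. Here the \emph{Coxeter diagram} of $W$ is the labelled graph on vertex set $R$ with an edge joining $r$ and $s$ whenever $m_{rs}\geq 3$, the edge carrying the label $m_{rs}$ when $m_{rs}\geq 4$; irreducibility of $W$ corresponds exactly to connectedness of this diagram, so throughout I may assume the diagram is connected. I would take the equivalence ``$W$ finite $\iff$ $\ip{\;}{\;}$ positive definite'' as the point of departure: it rests on the faithful, form-preserving action recalled in Section~\ref{back}, together with the fact that a finite group fixes some positive definite inner product, forcing $\ip{\;}{\;}$ to be non-degenerate and then positive definite.

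The central tool is that positive definiteness is inherited by principal submatrices: if $\Pi'\subseteq\Pi$ then the restriction of $\ip{\;}{\;}$ to the span of $\Pi'$ is again positive definite. Hence every subdiagram of a positive-definite diagram is itself positive definite, and it suffices to produce a list of \emph{forbidden} connected diagrams — each shown to fail positive definiteness — such that any connected diagram avoiding all of them lies on the desired list. The forbidden diagrams are the affine (extended) diagrams: the cycles $\tilde A_n$, the diagrams having a node of degree four or two distinct branch nodes, and a short list of labelled exceptions. For each I would check that its Gram determinant vanishes, most cleanly by exhibiting a nonzero vector $\sum_r\lambda_r\alpha_r$ in the radical of the form; for a cycle, for instance, taking all $\lambda_r$ equal gives such a vector, since each row sum of the Gram matrix is then $1-\tfrac12-\tfrac12=0$.

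With the forbidden diagrams excluded, the structural constraints fall out in sequence. The absence of any cycle forces the diagram to be a tree; ruling out the degree-four and two-branch affine diagrams forces the tree to be a path carrying at most one trivalent branch node; and ruling out the labelled affine diagrams bounds the edge labels, allowing a label $\geq 4$ only in a tightly constrained position and a label $\geq 6$ only in the rank-two case. A finite check of the surviving trees then yields exactly the families $A_n$, $B_n$, $D_n$, the dihedral groups $\dih(2m)$ (the rank-two diagrams, a single edge labelled $m$), and the sporadic diagrams $E_6,E_7,E_8,F_4,H_3,H_4$. To finish I would confirm that each surviving candidate really is positive definite, via a determinant computation: the Gram determinants of the $A_n$ and $B_n$ chains obey simple recurrences and stay positive, and the remaining exceptional forms are verified directly.

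The main obstacle is the forbidden-diagram bookkeeping: one must assemble a provably complete list of the minimal non-positive-definite connected diagrams and verify, through the radical-vector and determinant computations, that excluding all of them leaves no candidate beyond those listed. This exhaustiveness is the lengthy case-by-case heart of the argument; by contrast, the positive definiteness of the survivors is a routine recurrence calculation.
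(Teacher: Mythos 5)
The paper does not prove this theorem: it is stated as the classical classification, with the proof deferred to Coxeter \cite{coxeter} and Humphreys \cite{humphreys}, so there is no in-paper argument to compare yours against. Your proposal reproduces, in outline, exactly the proof in those cited references: $W$ is finite if and only if the form $\ip{\;}{\;}$ is positive definite; positive definiteness passes to principal submatrices, hence to subdiagrams; the affine diagrams are excluded by exhibiting nonzero radical vectors (constant coefficients on a cycle, and so on); and a connected diagram avoiding all of them is forced to be one of $A_n$, $B_n$, $D_n$, $\dih(2m)$, $E_6$, $E_7$, $E_8$, $F_4$, $H_3$, $H_4$, with the survivors confirmed positive definite by determinant recurrences. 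So the approach is correct and is the same one the paper implicitly relies on. Two caveats on what remains to be filled in. First, your justification of ``finite implies positive definite'' is incomplete as stated: averaging produces a $W$-invariant inner product, but to conclude anything about the Coxeter form itself you must first show that form is non-degenerate (equivalently, for irreducible $W$, that $V$ is an irreducible $W$-module, using complete reducibility and the fact that every proper submodule lies in the radical) and then apply Schur's lemma to identify the two forms up to a scalar, which is positive because $\ip{\alpha_r}{\alpha_r}>0$. Second, the step you label ``bookkeeping'' --- assembling a provably complete list of minimal non-positive-definite connected diagrams and showing that excluding them leaves nothing beyond the stated list --- is not peripheral; it is the entire case analysis that constitutes the classification, and a proof would have to carry it out rather than gesture at it. As a plan, then, your route is the standard and correct one; as a proof it is a faithful outline with the substantive verifications left open.
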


We shall employ the following explicit descriptions of the Coxeter
groups of types $A_n, B_n$ and $D_n$ and their root systems. First,
$W(A_n)$ may be viewed as being $\sym(n+1)$ with the set of
fundamental reflections given by $\{(12), (23), \dots , (n \; n+1)
\}$, while elements of $W(B_n)$ can be thought of as signed
permutations of $\sym(n)$. A cycle in an element of $W(B_n)$ is of
negative sign type if it has an odd number of minus signs, and
positive sign type otherwise. We take
$\{(\overset{+}{1}\overset{+}{2}), (\overset{+}{2}\overset{+}{3}),
\ldots, (\overset{+}{n-1} \; \overset{+}{n}), (\overset{-}{n})\}$ to
be the fundamental reflections in $W(B_n)$. An element $w$ expressed
as a product $g_1g_2\cdots g_k$ of disjoint signed cycles is {\em
positive} if the product of all the sign types of the cycles is
positive, and negative otherwise. The group $W(D_n)$ consists of all
positive elements of $W(B_n)$ and we take the fundamental
reflections of $W(D_n)$ to be $r_1 =
(\overset{+}{1}\overset{+}{2})$, $r_2 =
(\overset{+}{2}\overset{+}{3})$, $\ldots$, $r_{n-1} =
(\overset{+}{n-1} \; \overset{+}{n})$, $r_n =  (\overset{-}{n-1}\;
\overset{-}{n})$. Even if $w$ is positive, it may contain negative
cycles, which we wish on occasion to consider separately, so when
considering elements of
$W(D_n)$ we sometimes work in $W(B_n)$.\\

Let $\{e_i\}$ be an orthonormal basis with respect to the form
$\ip{\;}{\;}$ for $V$. For $\sigma \in W(A_n)$ define $e_i\cdot
\sigma = e_{i\sigma}$ -- note that our permutations and signed
permutations will always act on the right. The roots for $W(A_n)$
are $\pm (e_i - e_j)$ for $1\leq i < j\leq n$, with the positive
roots being $\{e_i - e_j \; | \; 1 \leq i < j \leq n\}$. The
positive roots of $W(B_n)$ are of the form $e_i \pm e_j$ for $1 \leq
i < j \leq n$ and $e_i$ for $1\leq i \leq n$. The positive roots of
$W(D_n)$ are of the form $e_i \pm e_j$ for $1 \leq i < j \leq n$.

\section{Excess and Standard Parabolic Subgroups}
\label{StParaexcess}

The main aim of this section is to prove Theorems \ref{parabolic}
and \ref{parabolicexcess}. So let $J$ be a subset of $R$.

\paragraph{Proof of Theorem \ref{parabolic}} Let $w \in W_J$ and $x, y \in W$ with $x^2 = y^2 = 1$ and
$xy=1$. Then, by Lemma \ref{basics}(ii), $x \in \J_w$. Therefore
$V_1(w) \subseteq V_1(x)$. Let $U = \{v \in V\; |\; W_J \subseteq
\stab(v)\}$. Then for all $u \in U$, $u \in V_1(w) \subseteq
V_1(x)$. Hence $U \subseteq V_1(x)$ and so $x \in W_J$. Thus
$\J_w|_{W_J} = \J_w$. Hence
$E_J(w) = E(w)$.\qed\\


We direct our attention to Theorem \ref{parabolicexcess} -- first we
must establish a number of preliminary results about spartan pairs.

\begin{lemma} \label{afterthought} Suppose $x$ and $y$ are involutions in $W$. If $z$
is an involution centralizing both $x$ and $y$, such that $\ell(zx)
< \ell(x)$ and $\ell(zy)< \ell(y)$, then $|N(zx) \cap N(zy)| < |N(x)
\cap N(y)|$. Hence $(x, y)$ is not a spartan pair for
$w=xy$.\end{lemma}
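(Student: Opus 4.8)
The plan is to reduce the strict inequality $|N(zx)\cap N(zy)| < |N(x)\cap N(y)|$ to a purely combinatorial count on the set $N(z)$, and then to read off the concluding clause from the characterisation of spartan pairs recorded after Definition \ref{2.3}. First I would exploit the hypothesis $z^2=1$ to observe that $z$ induces a map $\bar z$ on $\Phi^+$, sending a positive root $\alpha$ to whichever of $\pm(\alpha\cdot z)$ is positive; since $z$ is an involution, $\bar z$ is a bijection of $\Phi^+$ that is itself an involution and preserves both $N(z)$ and its complement $\Phi^+\setminus N(z)$.

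Next I would obtain a root-by-root description of $N(zx)$. Writing $\alpha\cdot z = \bar z(\alpha)$ when $\alpha\notin N(z)$ and $\alpha\cdot z = -\bar z(\alpha)$ when $\alpha\in N(z)$, and then applying $x$, one checks that $\alpha\in N(zx)$ holds precisely when exactly one of the statements ``$\alpha\in N(z)$'' and ``$\bar z(\alpha)\in N(x)$'' is true; the same holds for $N(zy)$ with $N(x)$ replaced by $N(y)$. Intersecting these two descriptions and substituting $\beta=\bar z(\alpha)$ (legitimate because $\bar z$ is a bijection fixing $N(z)$ setwise), I would arrive at
$$|N(zx)\cap N(zy)| = |N(z)\setminus(N(x)\cup N(y))| + |(N(x)\cap N(y))\setminus N(z)|.$$
Setting $P=N(x)\cap N(y)$, $a=|P\cap N(z)|$ and $d=|N(z)\setminus(N(x)\cup N(y))|$, this reads $|N(zx)\cap N(zy)| = d + |P| - a$, so the desired inequality is equivalent to the clean assertion $d<a$.

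To finish I would feed in the two length hypotheses. Lemma \ref{lengthadd}, together with $z^{-1}=z$, gives $\ell(zx)=\ell(z)+\ell(x)-2|N(z)\cap N(x)|$, so $\ell(zx)<\ell(x)$ forces $|N(z)|<2|N(z)\cap N(x)|$, and symmetrically $|N(z)|<2|N(z)\cap N(y)|$. Partitioning $N(z)$ into four pieces according to membership in $N(x)$ and $N(y)$ — with $a,d$ as above, $b=|N(z)\cap N(x)\setminus N(y)|$ and $c=|N(z)\cap N(y)\setminus N(x)|$ — these inequalities become $|N(z)|<2(a+b)$ and $|N(z)|<2(a+c)$; adding them and using $|N(z)|=a+b+c+d$ yields $d<a$, as required. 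For the final clause, the hypothesis that $z$ centralises $x$ and $y$ makes $zx$ and $zy$ involutions with $(zx)(zy)=zxzy=z^2xy=w$ (using $xz=zx$), so $(zx,zy)$ is an involution factorisation of $w$; by the spartan-pair criterion its contribution $2|N(zx)\cap N(zy)|$ is at least $e(w)$, whence $2|N(x)\cap N(y)| > 2|N(zx)\cap N(zy)| \ge e(w)$ shows that $(x,y)$ is not spartan.

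The main obstacle is the bookkeeping in the second paragraph: correctly tracking the sign changes produced first by $z$ and then by $x$, and verifying that the substitution $\beta=\bar z(\alpha)$ respects the splitting of $\Phi^+$ into $N(z)$ and its complement. Once that identity is in place, both the counting step and the concluding argument are immediate.
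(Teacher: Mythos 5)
Your proof is correct and takes essentially the same route as the paper: both hinge on the identity $|N(zx)\cap N(zy)| = |N(z)\setminus(N(x)\cup N(y))| + |(N(x)\cap N(y))\setminus N(z)|$, convert the hypotheses $\ell(zx)<\ell(x)$ and $\ell(zy)<\ell(y)$ via Lemma \ref{lengthadd} into $|N(z)|<2|N(z)\cap N(x)|$ and $|N(z)|<2|N(z)\cap N(y)|$, and then conclude non-spartanness by exhibiting the competing factorisation $w=(zx)(zy)$ with smaller $2|N(\cdot)\cap N(\cdot)|$. The only differences are presentational: you derive the key identity by root-by-root sign tracking through the induced involution $\bar z$ on $\Phi^+$ and finish with a four-block partition count of $N(z)$, whereas the paper obtains the same identity from the set formula in Lemma \ref{lengthadd} and finishes by substituting the length differences directly.
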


\begin{proof} By Lemma \ref{lengthadd} and the observation that
for an involution $\sigma$, $N(\sigma) = -\sigma N(\sigma)$, we
obtain
\begin{eqnarray*} N(zx) &=& N(z)\setminus \left(-N(x)\cdot z\right) \;
\dot\cup \;
\left(N(x)\setminus N(z)\right)\cdot z\\
&=& \left[ -\left(N(z)\setminus N(x)\right) \;\dot\cup \;
\left(N(x)\setminus N(z)\right)\right] \cdot z.\end{eqnarray*}%
 Similarly
\[N(zy) = \left[-\left(N(z)\setminus N(y)\right)\; \dot\cup \;
\left(N(y)\setminus N(z)\right)\right] \cdot z.\]
Notice that $\ell(zx) = \ell(z) + \ell(x) - 2|N(z) \cap N(x)|$.
Hence $\frac{1}{2}|N(z)| - |N(z) \cap N(x)| = \frac{1}{2}(\ell(zx) -
\ell(x))$, and the same is true for $y$. Therefore
\begin{eqnarray*} |N(zx) \cap N(zy)| &=& |N(z)\setminus \left(N(x)
\cup N(y)\right)| + |\left(N(x)\cap N(y)\right)\setminus N(z)|\\
&=& |N(z)| - |N(z)\cap N(x)| - |N(z) \cap N(y)| + |N(x)\cap
N(y)|\\
&=& |N(x) \cap N(y)| -\textstyle\frac{1}{2}\left(\ell(y) -
\ell(zy)\right) -\textstyle\frac{1}{2}\left(\ell(x) - \ell(zx)\right)\\
&<& |N(x) \cap N(y)|.\end{eqnarray*} %
If $(x, y)$ were a spartan pair for $w=xy$, then $e(w) = 2|N(x)\cap
N(y)|$. But $zx, zy \in \I_w$ with $(zx)(zy) = w$ and $2|N(zx) \cap
N(zy)| < e(w)$, a contradiction. Therefore $(x, y)$ cannot be a
spartan pair for $w$.\end{proof}\\

For the rest of this section, $W_n$ is a Coxeter group of type
$A_{n-1}$, $B_n$ or $D_n$; the elements of $W_n$ are therefore
cycles or signed cycles of $\sym(n)$. The notation $W = W(n_1,
\ldots, n_k)$ means $W$ is of type $W_k$ with support $\{n_1,
\ldots, n_k\}$. Suppose that $W_J$ is a maximal parabolic subgroup
of $W_n$. Then for some $m$ with $1 \leq m \leq n$, we may assume
that $W_J$ is of the form $\sym(1, 2, \ldots, m) \times W(m+1,
\ldots, n)$. Note that the case $m=n$ is not included if $W$ is of
type $A_{n-1}$. If $W$ is of type $D_n$, the length preserving graph
automorphism means that it is not necessary to consider separately
the case $W_J = \langle (\overset{+}{1}\overset{+}{2}),
(\overset{+}{2}\overset{+}{3}), \ldots, (\overset{+}{n-2} \;
\overset{+}{n-1}), (\overset{-}{n-1}\; \overset{-}{n})\rangle$,
as this will be covered by the case $m=n$. We will abuse notation
and deem $D(n_1, n_2)$ and $D(n_1,n_2,n_3)$ to be of types $A_1 \times A_1$ and $A_3$ respectively.\\

We remark that involutions in $W$ only contain cycles of the form
$(\+ a \; \+ b), (\mi a\; \mi b), (\+ a)$ and $(\mi a)$. That is,
1-cycles and positive 2-cycles.\\

For $u \in W$, the {\em positive support} of $u$, denoted
$\supp^+(u)$, is the set of all $a \in \{1, \ldots, n\}$ for which
$e_a \cdot u \neq e_a$. So $\supp^+(u)$, in the case of type $B$ and
$D$, differs from the support of $u$ as a permutation of the set
$\{\pm 1, \dots , \pm n \}$ by only considering the elements of
$\{1, \dots , n \}$ which are moved by $u$.

\begin{prop} \label{supp} Suppose $W_n$ is of type $A_{n-1}$ or $B_n$ and let $w \in W_n$.
If $(x, y)$ is a spartan pair for $w$, then $\supp^+(x) \cup \supp^+(y) \subseteq
\supp^+(w)$.\end{prop}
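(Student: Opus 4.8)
The plan is to argue by contradiction, the engine being Lemma \ref{afterthought}. Suppose $(x,y)$ is a spartan pair for $w=xy$, but that some $a\in\supp^+(x)\cup\supp^+(y)$ lies outside $\supp^+(w)$. I would then manufacture an involution $z$ centralizing both $x$ and $y$ with $\ell(zx)<\ell(x)$ and $\ell(zy)<\ell(y)$; Lemma \ref{afterthought} would declare $(x,y)$ not spartan, the desired contradiction. Thus it suffices to rule out such an $a$.

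First I would cash in the hypothesis $a\notin\supp^+(w)$, which says $e_a\cdot w=e_a$. Since the action is on the right and $w=xy$ with $y^2=1$, this reads $(e_a\cdot x)\cdot y=e_a$, whence $e_a\cdot x=e_a\cdot y$. In particular $a$ is moved by $x$ exactly when it is moved by $y$, so it is no loss to take $a\in\supp^+(x)\cap\supp^+(y)$, with $x$ and $y$ sending $a$ to the same signed point $\eta\neq e_a$. Next I would let $z$ be the cycle of $x$ through $a$. By the remark preceding the Proposition the only cycles in an involution are $(\+ a)$, $(\mi a)$, $(\+ a\;\+ b)$ and $(\mi a\;\mi b)$; since $e_a\cdot x=\eta\neq e_a$ this cycle is $(\mi a)$ (if $\eta=-e_a$), $(\+ a\;\+ b)$ (if $\eta=e_b$), or $(\mi a\;\mi b)$ (if $\eta=-e_b$). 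The crucial point is that in each case the cycle is a reflection: $(\mi a)=r_{e_a}$, $(\+ a\;\+ b)=r_{e_a-e_b}$ and $(\mi a\;\mi b)=r_{e_a+e_b}$. Because $x$ and $y$ agree at $a$ and are both involutions, this same cycle occurs in $y$; I set $z$ equal to it, so $z=r_\beta$ for a positive root $\beta\in\{e_a,\;e_a-e_b,\;e_a+e_b\}$ and $z^2=1$.

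Finally I would verify the two hypotheses of Lemma \ref{afterthought}. Since $z$ is a single disjoint cycle of $x$, we have $x=zx'$ with $x'$ supported away from $\supp^+(z)$, so $z$ commutes with $x$ (indeed $zx=x'$), and likewise with $y$. A direct sign computation gives $\beta\cdot x=-\beta\in\Phi^-$, and the same for $y$; hence by Lemma \ref{millie}(i) $\beta\in N(x)\cap N(y)$, giving $\ell(zx)=\ell(r_\beta x)<\ell(x)$ and $\ell(zy)<\ell(y)$. Lemma \ref{afterthought} then contradicts the assumption that $(x,y)$ is spartan, so no such $a$ exists and $\supp^+(x),\supp^+(y)\subseteq\supp^+(w)$.

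I expect the only real care to be the bookkeeping of signs in the three cases, confirming that the cycle through $a$ is genuinely a reflection $r_\beta$ of $W_n$ with $\beta\cdot x=-\beta$; everything else is a clean application of Lemma \ref{afterthought}. It is worth flagging that this is precisely the step where the restriction to types $A$ and $B$ enters: the sign-change cycle $(\mi a)=r_{e_a}$ is a reflection of $W(B_n)$ but is not an element of $W(D_n)$, and $(\mi a\;\mi b)=r_{e_a+e_b}$ requires $e_a+e_b$ to be a root. In type $D$ the candidate $z$ may fall outside $W_n$, which is why that case must be handled separately in Proposition \ref{suppdn}.
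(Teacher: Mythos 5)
Your proposal is correct and follows essentially the same route as the paper: both argue by contradiction via Lemma \ref{afterthought}, using the reflection $r_\beta$ (which the paper calls $r_\alpha$, defined directly from $e_i\cdot y$, and which you identify as the common cycle of $x$ and $y$ through $a$) as the centralizing involution $z$, with Lemma \ref{millie} giving the length decreases. The only cosmetic differences are that you treat $\supp^+(x)$ and $\supp^+(y)$ simultaneously where the paper does $y$ first and then repeats the argument with $x$ and $w^{-1}$, and that you justify centralization by disjointness of cycles rather than by $\alpha\cdot x=-\alpha$.
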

\begin{proof} Suppose for a contradiction that there exists $i \in \supp^+(y) \setminus \supp^+(w)$.
Then $ e_i\cdot y = \pm e_j$ for some $j$ with either $i\neq j$ or
$e_i\cdot y = -e_i$. Now $ e_i \cdot w = e_i$ forces $e_i\cdot x =
e_i\cdot y$. Define a positive root $\alpha$ as follows:
$$\alpha = \left\{\begin{array}{ll}e_i - e_j & {\mbox{ if $e_i\cdot y = e_j$, $j>i$}};
\\
e_j - e_i & {\mbox{ if $e_i \cdot y= e_j$, $j<i$}};
\\
e_i+e_j & {\mbox{ if $e_i\cdot y = -e_j, j\neq i$; and}}\\
e_i & {\mbox{ if $e_i\cdot y = -e_i$}}.\end{array}\right.$$ Then
$\alpha\cdot x = \alpha\cdot y = -\alpha$. This means that
$r_{\alpha}$ centralizes both $x$ and $y$. Moreover
$\ell(r_{\alpha}x) < \ell(x)$ and $\ell(r_{\alpha}y) < \ell(y)$. By
Lemma \ref{afterthought} this contradicts the fact that $(x,y)$ is a
spartan pair. Hence $\supp^+(y) \subseteq \supp^+(w)$. The same
argument with $x$ and $w^{-1}$ implies that $\supp^+(x) \subseteq
\supp^+(w^{-1}) = \supp^+(w)$. Therefore $\supp^+(x) \cup \supp^+(y)
\subseteq \supp^+(w)$.
\end{proof}\\

\begin{prop} \label{suppdn} Suppose $W_n$ is of type $D_n$ and $w
\in W_n$. If $(x, y)$ is a spartan pair for $w$, then $|\supp^+(y)
\setminus \supp^+(w)| \leq 1$ and if $i \in \supp^+(y)\setminus
\supp^+(w)$ then $e_i\cdot y = e_i\cdot x = -e_i$. Furthermore
$\supp^+(y) \setminus
\supp^+(w) = \supp^+(x) \setminus \supp^+(w)$.\end{prop}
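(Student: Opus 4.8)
The plan is to mimic the structure of the proof of Proposition \ref{supp}, but to account carefully for the parity constraint that distinguishes type $D_n$ from type $B_n$. As in the previous proposition, suppose $i \in \supp^+(y) \setminus \supp^+(w)$. Since $e_i \cdot w = e_i$ and $w = xy$, we again have $e_i \cdot x = e_i \cdot y$, so $i \in \supp^+(x)$ as well, and $e_i$ is moved to the same image $\pm e_j$ by both $x$ and $y$. This already gives the inclusion $\supp^+(y) \setminus \supp^+(w) \subseteq \supp^+(x) \setminus \supp^+(w)$, and by symmetry (running the argument with $x$ and $w^{-1}$) the reverse inclusion, yielding the final equality $\supp^+(y) \setminus \supp^+(w) = \supp^+(x) \setminus \supp^+(w)$.

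The substance is in the first two assertions. First I would pin down the type of the cycle. If $e_i \cdot y = \pm e_j$ with $j \neq i$, then the relevant positive root is $e_i \mp e_j$, which lies in $\Phi^+(D_n)$, and the reflection $r_\alpha$ is an honest element of $W(D_n)$ centralizing both $x$ and $y$ with $\ell(r_\alpha x) < \ell(x)$ and $\ell(r_\alpha y) < \ell(y)$; Lemma \ref{afterthought} then contradicts sparseness exactly as before. The crucial point where type $D$ departs from type $B$ is that the short root $e_i$ is \emph{not} a root of $D_n$, so when $e_i \cdot y = -e_i$ the reflection $r_{e_i}$ that we used in Proposition \ref{supp} is unavailable. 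This forces precisely the case $e_i \cdot y = e_i \cdot x = -e_i$ to survive, establishing the second assertion: any index in $\supp^+(y) \setminus \supp^+(w)$ must be a fixed-sign-flip $(\mi i)$.

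For the bound $|\supp^+(y) \setminus \supp^+(w)| \leq 1$, I would argue by contradiction. Suppose $i$ and $k$ are two distinct indices in $\supp^+(y) \setminus \supp^+(w)$. By the previous step both satisfy $e_i \cdot y = -e_i$, $e_i \cdot x = -e_i$ and $e_k \cdot y = -e_k$, $e_k \cdot x = -e_k$; that is, $(\mi i)$ and $(\mi k)$ are both negative $1$-cycles of $x$ and of $y$. The key observation is that although $e_i$ and $e_k$ are individually non-roots of $D_n$, the combination $e_i + e_k$ (and $e_i - e_k$) \emph{is} a positive root of $D_n$. The reflection $r_{e_i + e_k}$, or equivalently the element swapping the two sign-flips $(\mi i)(\mi k)$, is a genuine involution in $W(D_n)$. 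I would check that this involution $z$ centralizes both $x$ and $y$ (since both fix the subspace spanned by $e_i, e_k$ setwise, acting as $-1$ on it) and that $\ell(zx) < \ell(x)$, $\ell(zy) < \ell(y)$, so that Lemma \ref{afterthought} again contradicts $(x,y)$ being a spartan pair. This yields at most one surviving index.

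The main obstacle I anticipate is verifying the length-decrease inequalities $\ell(zx) < \ell(x)$ and $\ell(zy) < \ell(y)$ in the last step, together with confirming that $z$ really does centralize $x$ and $y$ rather than merely normalizing the relevant cycles. Concretely, one must confirm that $\alpha = e_i + e_k \in N(x) \cap N(y)$ (equivalently that $\alpha \cdot x, \alpha \cdot y \in \Phi^-$) so that Lemma \ref{millie}(i) delivers the strict length drop; this is where the explicit action $e_i \cdot x = -e_i$, $e_k \cdot x = -e_k$ (giving $\alpha \cdot x = -\alpha$) must be used carefully, and where I would need to be sure $z$ and $x$ (resp.\ $y$) commute as signed permutations. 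Once these verifications are in place, Lemma \ref{afterthought} closes both the type-$D$ case analysis and the cardinality bound cleanly.
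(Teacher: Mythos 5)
Your proposal is correct and takes essentially the same route as the paper's proof: rule out $e_i\cdot y=\pm e_j$ ($j\neq i$) exactly as in Proposition \ref{supp} (the root $e_i$ being unavailable in $D_n$), eliminate two surviving sign-flip indices $i,k$ using the positive root $e_i+e_k$ together with Lemmas \ref{millie}(i) and \ref{afterthought}, and obtain $\supp^+(y)\setminus\supp^+(w)=\supp^+(x)\setminus\supp^+(w)$ from $e_i\cdot x=e_i\cdot y$ plus symmetry in $x$ and $w^{-1}$. One notational slip worth fixing: the reflection $r_{e_i+e_k}$ is the negative $2$-cycle $(\mi{i}\;\mi{k})$, not the double sign-flip $(\mi{i})(\mi{k})$; but since your length-drop verification runs through $\beta\cdot x=\beta\cdot y=-\beta$ and Lemma \ref{millie}(i), it is the reflection you actually use, which is precisely the paper's choice.
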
 %
\begin{proof}
Suppose $i \in \supp^+(y)\setminus \supp^+(w)$ is such that
$e_i\cdot y = \pm e_j$ for some $j \neq i$. Then $e_i\cdot x =
e_i\cdot y$ and we define the positive root $\alpha$ as in the proof
of Proposition \ref{supp}, noting that the possibility $\alpha =
e_i$ does not occur, and so $\alpha$ is indeed a root of $D_n$.
Again, $\ell(r_{\alpha}x) < \ell(x)$ and $\ell(r_{\alpha}y) <
\ell(y)$, contradicting the fact that $(x, y)$ is a spartan pair.\\
Therefore, $\supp^+(y) \setminus \supp^+(w) \subseteq \{i\; |\;
 e_i\cdot y = -e_i\}$. Suppose $\{i, k\} \subseteq \supp^+(y)
\setminus \supp^+(w)$ with $i\neq k$. Let $\beta = e_i + e_k \in
\Phi^+$. Then $\beta\cdot y = \beta\cdot x = -\beta$ and hence
$\ell(r_{\beta}x) < \ell(x)$ and $\ell(r_{\beta}y) < \ell(y)$,
contradicting the fact that $(x,y)$ is a spartan pair. Hence
$\supp^+(y) \setminus \supp^+(w)$ contains at most one element $i$,
and $e_i\cdot y = -e_i$. Since $e_i\cdot x = e_i\cdot y$, we have
$\supp^+(y) \setminus \supp^+(w) \subseteq \supp^+(x) \setminus
\supp^+(w)$. Repeating the argument with $x$ and $w^{-1}$ gives the
reverse inclusion, forcing $\supp^+(y) \setminus \supp^+(w) =
\supp^+(x) \setminus
\supp^+(w)$.\end{proof}\\

Note that there are examples of spartan pairs $(x,y)$ for $w$ where
$\supp^+(y)$ is not contained in $\supp^+(w)$. These examples are
the source of (infinitely many) cases  in which $e_J(w) > e(w)$. One
such is the following: $w = (\+{2} \; \+{4} \; \+{6} \; \+{8} \;
\+{10} \; \mi{12}\; \+{11} \; \+{9} \; \+{7} \; \+{5} \; \mi{3}) \in
D_{12}$. As a product of fundamental reflections $w =
[468.10.3456789.10.11.12.10.987654323579.11]$ where $10$ is the
branch node of the $D_{12}$ diagram, and in this expression for $w$
we have written $i$ instead of $r_i$. Clearly $w$ lies in a standard
parabolic subgroup $W_J$ of type $D_{11}$. It can be shown that
$e_J(w) = 60$, whereas $e(w) = 46$, given by the spartan pair $(x,
y)$ where $x = (\mi{1})(\+{2} \; \+{3})(\+ {4} \; \+{5})(\+{6} \;
\+{7})(\+{8} \; \+{9})(\+{10} \; \+{11})(\mi{12})$, $y =
(\mi{1})(\mi{2})(\+{3} \; \+{4})(\+{5} \; \+{6})(\+{7} \;
\+{8})(\+{9} \; \+{10})(\+{11} \; \+{12})$.

\begin{prop} \label{swapcycle} Suppose $(x,y)$ is a spartan pair for $w \in W_n$.
If $(\+{a_1} \cdots \+{a_k})$ and $(\overset{\eps_{1}}{b_1} \cdots
\overset{\eps_{k}}{b_k})$ are disjoint $w$-cycles for which
$(\+{a_1} \cdots \+{a_k})^y = (\overset{\eps_{1}}{b_1} \cdots
\overset{\eps_{k}}{b_k})^{-1}$, then $\max \{a_i\} > \min\{b_i\}$.
\end{prop}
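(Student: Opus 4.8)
The plan is to argue by contradiction. Suppose $\max\{a_i\} < \min\{b_i\}$, so that every element of $\supp^+(C_1) = \{a_1,\dots,a_k\}$ is smaller than every element of $\supp^+(C_2) = \{b_1,\dots,b_k\}$, and I would produce an involution that violates the spartan condition through Lemma~\ref{afterthought}. First observe that, since $(\+{a_1}\cdots\+{a_k})^y = (\overset{\eps_1}{b_1}\cdots\overset{\eps_k}{b_k})^{-1}$, the two cycles are conjugate, hence have the same length $k$ and the same sign type; as $C_1$ is positive, $C_2$ has positive sign type, so an even number of the signs $\eps_i$ are negative. Moreover $w$ commutes with each of its own cycles, and $x = wy$, so $C_1^x = (C_1^w)^y = C_1^y = C_2^{-1}$ as well; thus both $x$ and $y$ carry $C_1$ to $C_2^{-1}$, that is, they swap the two cycles "out of phase".

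Against this I would set $z$ to be the "in-phase" swap of the two cycles. Writing $S = \supp^+(C_1)\cup\supp^+(C_2)$, let $z$ fix every point outside $S$ and on $S$ act as $z = \prod_{i=1}^{k}(\+{a_i}\ \overset{\eps_i}{b_i})$, the product of the symmetric signed transpositions sending $e_{a_i}\mapsto \eps_i e_{b_i}$ and $e_{b_i}\mapsto \eps_i e_{a_i}$. Each factor squares to the identity whatever the sign $\eps_i$, so $z^2 = 1$; the number of sign changes occurring in $z$ equals the number of negative $\eps_i$, which is even, so $z$ is a genuine (positive) element of $W_n$, even in type $D_n$; and by construction $C_1^z = C_2$ and $C_2^z = C_1$. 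Now $C_1^{zx} = (C_1^z)^x = C_2^x = C_1^{-1}$ and likewise $C_2^{zx} = C_2^{-1}$, while off $S$ the element $zx$ agrees with the involution $x$. Hence $zx$ inverts each $w$-cycle and is itself an involution, and the same holds for $zy$. Since $zx$ and $zy$ are involutions, $z$ commutes with both $x$ and $y$, so $z$ is exactly the kind of centralizing involution required by Lemma~\ref{afterthought}.

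The crux is then to verify the two length inequalities $\ell(zx) < \ell(x)$ and $\ell(zy) < \ell(y)$, equivalently (by Lemma~\ref{lengthadd} and $z^2=1$, exactly as in the proof of Lemma~\ref{afterthought}) that $|N(z)\cap N(x)| > \tfrac{1}{2}|N(z)|$ and $|N(z)\cap N(y)| > \tfrac{1}{2}|N(z)|$. This is where the hypothesis $\max\{a_i\}<\min\{b_i\}$ is decisive: because $C_1$ lies entirely below $C_2$, the element $x$ sends each $a_i$ into the high block $\supp^+(C_2)$ and each $b_j$ into the low block $\supp^+(C_1)$, so every one of the $k^2$ positive roots $e_{a_i}-e_{b_j}$ is mapped to a negative root and lies in $N(x)\cap N(y)$. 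These forced inversions make $x$ and $y$ long, and the aim is to deduce from them that $x$ (respectively $y$) inverts a strict majority of the roots inverted by the in-phase swap $z$. Since $k\ge 2$ we have $z\neq 1$, so $N(z)$ is nonempty and the inequalities are strict. Granting both, Lemma~\ref{afterthought} shows $(x,y)$ is not a spartan pair, contradicting the hypothesis; hence $\max\{a_i\}>\min\{b_i\}$.

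I expect the real work to sit entirely in that last root count. When $C_1\cup C_2$ accounts for all the moved points of $w$ one checks directly that $N(z)\subseteq N(x)$, and the inequality is immediate. In general, however, the roots having one foot in $S$ and one foot in $\supp^+(w)\setminus S$ can be inverted by $x$ but not by $z$ (or vice versa), so one must bound how many roots of $N(z)$ escape $N(x)$ and confirm that strictly fewer than half do. Carrying this out calls for a case analysis according to the sign $\eps_i$ and the position of the outside foot relative to the two blocks, and the bookkeeping will differ slightly between types $A$, $B$ and $D$ because of the extra roots $e_i+e_j$ and $e_i$; this is the main obstacle, and it is presumably where the earlier control of supports from Propositions~\ref{supp} and~\ref{suppdn} is brought to bear.
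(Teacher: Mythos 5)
Your architecture is the same as the paper's --- assume $\max\{a_i\}<\min\{b_i\}$, construct an ``in-phase'' swap $z$ of the two cycles that is an involution centralizing both $x$ and $y$, show $\ell(zx)<\ell(x)$ and $\ell(zy)<\ell(y)$, and invoke Lemma~\ref{afterthought} --- but two steps fail as written. First, the signs on $z$ are wrong. Writing $e_{a_i}\cdot y=\rho_i e_{b_{k+1-i}}$, the relation $w^y=w^{-1}$ forces $\rho_{i-1}\rho_i=\eps_{k+1-i}$ (and $\rho_k\rho_1=\eps_k$), and the correct choice, as in the paper, is $z=(\overset{\rho_k}{a_1}\;\overset{\rho_k}{b_1})(\overset{\rho_{k-1}}{a_2}\;\overset{\rho_{k-1}}{b_2})\cdots(\overset{\rho_1}{a_k}\;\overset{\rho_1}{b_k})$: the phases come from $y$'s own transpositions, not from the cycle signs $\eps_i$. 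With your choice $e_{a_i}\cdot z=\eps_i e_{b_i}$, already for $k=2$ and $C_2=(\mi{b_1}\,\mi{b_2})$ (so $\eps_1=\eps_2=-1$) one computes $e_{b_1}\cdot zC_1z=+e_{b_2}$ while $e_{b_1}\cdot C_2=-e_{b_2}$; hence $C_1^z\neq C_2$, $z$ does not centralize $w$, and the claims that $zx$, $zy$ are involutions collapse (the correct $z$ here has \emph{mixed} signs, e.g.\ $(\mi{a_1}\,\mi{b_1})(\+{a_2}\,\+{b_2})$). Note also that ``$zx$ inverts every $w$-cycle'' would not by itself make $zx$ an involution; the paper verifies involutivity by multiplying out $yz$ explicitly. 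Relatedly, your assertion that all $k^2$ roots $e_{a_i}-e_{b_j}$ lie in $N(x)\cap N(y)$ is true only in type $A$: in general $(e_{a_i}-e_{b_j})\cdot y=\rho_i e_{b_{k+1-i}}-\rho_{k+1-j}e_{a_{k+1-j}}$, which is a \emph{positive} root whenever $\rho_{k+1-j}=-1$.

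Second, and decisively, the length inequalities --- which you correctly identify as the crux --- are left unproven, and the majority count $|N(z)\cap N(y)|>\tfrac{1}{2}|N(z)|$ you propose is the hard way around: roots with one foot in $\{a_i\}\cup\{b_i\}$ and one foot outside genuinely escape, and you give no bound. The paper avoids any global count. It factors $z$ into its $k$ disjoint signed transpositions, sets $z_i=\prod_{t=1}^{i}(\overset{\rho_{k+1-t}}{a_t}\;\overset{\rho_{k+1-t}}{b_t})$, and exhibits for each $i$ the single root $\beta_i=e_{a_{i+1}}-\rho_{k-i}e_{b_{i+1}}$, which is \emph{positive} precisely because $a_{i+1}<b_{i+1}$ under the contradiction hypothesis (this is the only place that hypothesis is used), and shows $\beta_i\cdot z_iy\in\Phi^-$ and $\beta_i\cdot z_ix\in\Phi^-$ by direct computation. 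Since $r_{\beta_i}$ is exactly the next factor of $z$ and the factors commute, Lemma~\ref{millie} gives the strictly decreasing chains $\ell(zy)<\ell(z_{k-1}y)<\cdots<\ell(y)$ and likewise for $x$, after which Lemma~\ref{afterthought} finishes as you intend. So your proposal needs (i) the corrected phases on $z$ together with actual verifications that $yz$ is an involution and $zwz=w$, and (ii) the one-transposition-at-a-time length argument in place of the uncompleted root count; contrary to your closing guess, Propositions~\ref{supp} and~\ref{suppdn} play no role in this step.
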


\begin{proof} Without loss of generality, assume $a_1y = \pm b_k$,
$\ldots$, $a_iy = \pm b_{k+1-i}$, $\ldots$, $a_ky = \pm b_1$. Let $T
= \{1, \ldots, n\}\setminus\{a_1, \ldots, a_k, b_1, \ldots, b_k\}$.
Then $y = y_1y_2$ for some involution $y_1$ with $\supp(y_1)
\subseteq T$ and $y_2 = (\overset{\rho_1}{a_1} \;
\overset{\rho_1}{b_k}) \cdots (\overset{\rho_k}{a_k} \;
\overset{\rho_k}{b_1})$. Define $z = (\overset{\rho_k}{a_1} \;
\overset{\rho_k}{b_1})(\overset{\rho_{k-1}}{a_2} \;
\overset{\rho_{k-1}}{b_2}) \cdots (\overset{\rho_1}{a_k} \; \overset{\rho_1}{b_k})$. Now%
\begin{eqnarray*} yz &=& y_1y_2z = y_1
\prod_{i=1}^{k}(\overset{\rho_i}{a_i} \;
\overset{\rho_i}{b_{k+1-i}})%
\prod_{i=1}^{k}(\overset{\rho_{k+1-i}}{a_i} \;
\overset{\rho_{k+1-i}}{b_{i}})\\%
&=& y_1 \prod_{i=1}^{\lfloor
k/2\rfloor}(\overset{\rho_i\rho_i}{a_i} \;
\overset{\rho_i\rho_i}{a_{k+1-i}})%
\prod_{i=1}^{\lfloor k/2\rfloor}(\overset{\rho_i\rho_{k+1-i}}{b_i}
\;
\overset{\rho_i\rho_{k+1-i}}{b_{k+1-i}})\\%
&=& y_1 \prod_{i=1}^{\lfloor k/2\rfloor}(\+{a_i} \;
\+{a_{k+1-i}})%
\prod_{i=1}^{\lfloor k/2\rfloor}(\overset{\rho_i\rho_{k+1-i}}{b_i}
\; \overset{\rho_i\rho_{k+1-i}}{b_{k+1-i}}).\end{eqnarray*} %
Therefore  $yz$ is an involution. Next we show that $xz$ is an
involution. We know that $w^y = w^{-1}$. Hence for $1 < i \leq k$,
\begin{eqnarray*}e_{a_{i-1}} &=& e_{a_i}\cdot ywy \\ &=&
\rho_ie_{b_{k+1-i}}\cdot wy \\ &=&
\eps_{k+1-i}\rho_ie_{b_{k+2-i}}\cdot y
\\ &=& \rho_{i-1}\rho_i\eps_{k+1-i}e_{a_{i-1}}.\end{eqnarray*}%
Therefore $\rho_{i-1}\rho_i = \eps_{k+1-i}$. Similarly $\rho_k\rho_1
= \eps_{k}$. This allows us to calculate $e_{a_j}\cdot zwz$ and
$e_{b_j}\cdot zwz$ for $1\leq j \leq k$:
\begin{eqnarray*} e_{a_j}\cdot zwz &=&
\rho_{k+1-j}\eps_j\rho_{k-j} e_{a_{j+1}} = e_{a_{j+1}} =
e_{a_j}\cdot w\\%
e_{b_j}\cdot zwz &=& \rho_{k+1-j}\rho_{k-j} e_{a_{j+1}} =
\eps_{j}e_{b_{j+1}} = e_{b_j}\cdot w\end{eqnarray*} %
Hence $zwz = w$ and so $z$ centralizes $x = wy$. \smallskip\\
Suppose for a contradiction that $\max\{a_i\} < \min\{b_i\}$. We
will show that $\ell(zy) < \ell(y)$ and $\ell(zx) < \ell(x)$. Write
$z_i = \prod_{t=1}^i (\overset{\rho_{k+1-t}}{a_t}
\; \overset{\rho_{k+1-t}}{b_t})$. Now%
\begin{eqnarray*} (e_{a_{i+1}}- \rho_{k-i}e_{b_{i+1}})\cdot z_iy
&=& (e_{a_{i+1}}- \rho_{k-i}e_{b_{i+1}})\cdot y  = (e_{a_{i+1}}\cdot
\prod_{i=1}^{k}(\overset{\rho_i}{a_i} \;
\overset{\rho_i}{b_{k+1-i}})- \rho_{k-i}e_{b_{i+1}})\\
&=& \rho_{i+1}e_{b_{k-i}} -\rho_{k-i}\rho_{k-i}e_{a_{k-i}} =
-e_{a_{k-i}} \pm e_{b_{k-i}}  \in \Phi^{-}.
\end{eqnarray*}
Hence by Lemma \ref{millie} $\ell(zy) = \ell(z_ky) < \ell(z_{k-1}y)
< \cdots < \ell(z_1y) < \ell(y)$. Similarly
$$(e_{a_{i+1}} - \rho_{k-i}e_{b_{i+1}})\cdot z_ix =
(-e_{a_{k-i}} \pm e_{b_{k-i}})\cdot w^{-1}= -e_{a_{k-i-1}} \pm
e_{b_{k-i-1}} \in \Phi^{-}$$ and so $\ell(zx) < \ell(x)$. But by
Lemma \ref{afterthought} this contradicts the fact that $(x, y)$ is
a spartan pair. Therefore $\max \{a_i\} > \min\{b_i\}$.
\end{proof}\\

\begin{cor}\label{overlap} Suppose $w \in W_n$ is contained in a
maximal parabolic subgroup $W_J$ of $W_n$ which is of the form
$\sym(1, 2, \ldots, m) \times W(m+1, \ldots, n)$. If $(x, y)$ is a
spartan pair for $w$, then for every 2-cycle $(\arb a\; \arb b)$ of
$x$ or $y$, either $\{a, b\} \subseteq \{1, \ldots, m\}$ or $\{a,
b\} \subseteq \{m+1, \ldots, n\}$.
\end{cor}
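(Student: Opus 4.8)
The plan is to argue by contradiction: I suppose that some $2$-cycle of $x$ or $y$ is a \emph{crossing} cycle, i.e. of the form $(\arb a\;\arb b)$ with $a \in \{1,\ldots,m\}$ and $b \in \{m+1,\ldots,n\}$, and derive a contradiction from Proposition \ref{swapcycle}. First I would record the block structure forced by $w \in W_J = \sym(1,\ldots,m)\times W(m+1,\ldots,n)$: the permutation $w$ preserves the blocks $B_1 = \{1,\ldots,m\}$ and $B_2 = \{m+1,\ldots,n\}$, so every $w$-cycle lies entirely in one block. Moreover, since the first factor is of type $A$ and hence acts without signs, every $w$-cycle inside $B_1$ is all-positive, of the form $(\+{a_1}\cdots\+{a_k})$.

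Next I would reduce to genuine (nontrivial) $w$-cycles at the two endpoints. If $y$ has a $2$-cycle on $\{a,b\}$ then $a,b \in \supp^+(y)$. For types $A$ and $B$, Proposition \ref{supp} gives $a,b \in \supp^+(w)$ immediately; for type $D$, Proposition \ref{suppdn} forces any element of $\supp^+(y)\setminus\supp^+(w)$ to sit in a negative $1$-cycle $(\mi i)$ of $y$, not a $2$-cycle, so again $a,b \in \supp^+(w)$. Hence $a$ lies in a nontrivial $w$-cycle $C_a \subseteq B_1$ and $b$ in a nontrivial $w$-cycle $C_b \subseteq B_2$; as $B_1 \cap B_2 = \emptyset$, these cycles are disjoint.

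The heart of the argument is that a crossing $2$-cycle forces $y$ to interchange $C_a$ and $C_b$. Since $y \in \I_w$ is an involution inverting $w$ and $a\cdot y = \pm b$, conjugation by $y$ carries the $w$-cycle $C_a$ onto the $w^{-1}$-cycle through $b$, namely $C_b^{-1}$; thus $C_a^{\,y} = C_b^{-1}$ and in particular $|C_a| = |C_b| = k$. Writing $C_a = (\+{a_1}\cdots\+{a_k})$ and $C_b = (\overset{\eps_1}{b_1}\cdots\overset{\eps_k}{b_k})$, Proposition \ref{swapcycle} then yields $\max\{a_i\} > \min\{b_i\}$. But each $a_i \leq m$ and each $b_i \geq m+1$, so $\max\{a_i\} \leq m < m+1 \leq \min\{b_i\}$, a contradiction. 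Hence $y$ has no crossing $2$-cycle.

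Finally, for the $2$-cycles of $x$ I would invoke symmetry: $(y,x)$ is a spartan pair for $w^{-1}$, since $yx = w^{-1}$, $\;|N(y)\cap N(x)| = |N(x)\cap N(y)|$, and $e(w^{-1}) = e(w)$; moreover $w^{-1}\in W_J$ has the same block structure, with all-positive cycles in $B_1$. Applying the argument of the previous paragraph to this pair rules out crossing $2$-cycles of $x$ as well, and the corollary follows. I expect the main obstacle to be the bookkeeping in the third paragraph, namely verifying cleanly that a crossing $2$-cycle really does make $y$ (respectively $x$) interchange the two $w$-cycles with precisely the orientation $C_a^{\,y} = C_b^{-1}$ demanded by Proposition \ref{swapcycle}, keeping track of the signs $\eps_i$; everything else is immediate from the support propositions.
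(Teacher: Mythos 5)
Your proof is correct and follows essentially the same route as the paper's: both rest on applying Proposition \ref{swapcycle} to the disjoint $w$-cycles containing $a$ and $b$ (all-positive in the $\sym(1,\ldots,m)$ block) and observing that the conclusion $\max\{a_i\} > \min\{b_i\}$ is incompatible with $a_i \leq m < m+1 \leq b_j$. The only cosmetic differences are that the paper disposes of the $2$-cycles of $x$ via the identity $x = wy$ rather than your $(y,x)$-spartan-pair-for-$w^{-1}$ symmetry, and it skips your preliminary reduction through Propositions \ref{supp} and \ref{suppdn}, since Proposition \ref{swapcycle} as stated applies to cycles of any length.
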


\begin{proof}
Since $x = wy$, it is enough to prove the result for 2-cycles $(\arb
a\; \arb b)$ of $y$. %
Without loss of generality, assume $a < b$. Suppose that $\{a, b\}
\not\subseteq \{m+1, \ldots, n\}$. Then $a \in \{1, \ldots, m\}$. If
$b$ lies in the same $w$-cycle as $a$, then $b$ is forced to lie in
$\{1, \ldots, m\}$ (since $w \in W_J$) and we are done. If $b$ lies
in a different $w$-cycle then since $w^y = w^{-1}$, the $w$-cycles
containing $a$ and $b$ respectively are of the form $(\+{a_1} \cdots
\+{a_k})$ and $(\overset{\eps_{1}}{b_1} \cdots
\overset{\eps_{k}}{b_k})$. By Proposition \ref{swapcycle}, we see
that $\max \{a_i\} > \min\{b_i\}$. Since $\max\{a_i\} \leq m$, we
get $\{b_1, \ldots, b_k\} \subseteq \{1, \ldots, m\}$. In
particular, $\{a, b\} \subseteq \{1, \ldots, m\}$.\end{proof}\\

\begin{thm} \label{anexcess} Suppose $W = W(A_n)$ and $w \in W_J \leq W$. Then
$e_J(w) = e(w)$. \end{thm}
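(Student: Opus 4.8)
The plan is to prove the two inequalities $e(w) \le e_J(w)$ and $e_J(w) \le e(w)$ separately. The first is immediate: since $W_J$ is a Coxeter group whose length function is the restriction of $\ell$, any decomposition $w = xy$ with $x, y \in W_J$ involutions is also such a decomposition in $W$, so the minimum defining $e(w)$ ranges over a larger set and hence $e(w) \le e_J(w)$. For the reverse inequality it suffices to produce a spartan pair $(x,y)$ for $w$ (in the ambient group $W$) with $x, y \in W_J$; then $e_J(w) \le \ell(x) + \ell(y) - \ell(w) = e(w)$.

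So fix any spartan pair $(x,y)$ for $w$ in $W = \sym(n+1)$. A standard parabolic subgroup of type $A$ is a Young subgroup: writing $\{m \mid r_m \notin J\} = \{m_1 < \cdots < m_t\}$ for the omitted fundamental reflections, $W_J$ is the product $\sym(S_0) \times \cdots \times \sym(S_t)$ of symmetric groups on the consecutive blocks $S_0 = \{1, \ldots, m_1\}$, $S_1 = \{m_1 + 1, \ldots, m_2\}, \ldots, S_t = \{m_t + 1, \ldots, n+1\}$. Equivalently, $W_J = \bigcap_{j=1}^{t} M_j$, where $M_j = \sym(1, \ldots, m_j) \times \sym(m_j + 1, \ldots, n+1)$ is the maximal parabolic obtained by omitting $r_{m_j}$. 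Since $w \in W_J$, we have $w \in M_j$ for every $j$.

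Now I would apply Corollary \ref{overlap} to each maximal parabolic $M_j$ in turn, noting that $(x,y)$ remains a spartan pair for $w$ in $W$ throughout and that $w \in M_j$: every $2$-cycle of $x$ or of $y$ has both of its points in $\{1, \ldots, m_j\}$ or both in $\{m_j+1, \ldots, n+1\}$, that is, no $2$-cycle straddles the cut point $m_j$. As $x$ and $y$ are involutions, they consist entirely of $1$-cycles and $2$-cycles, so this holds at every cut point $m_1, \ldots, m_t$ simultaneously for the single fixed pair $(x,y)$. A $2$-cycle $(a\;b)$ with $a < b$ straddles some cut point exactly when $a \le m_j < b$ for some $j$; avoiding all cut points forces $a$ and $b$ to lie in a common block $S_i$. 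Hence every cycle of $x$, and of $y$, is supported inside one block, giving $x, y \in \sym(S_0) \times \cdots \times \sym(S_t) = W_J$. This yields $e_J(w) \le e(w)$ and completes the proof.

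The substance of the argument has already been discharged in the earlier results, in particular the delicate centralizer construction of Proposition \ref{swapcycle} and its packaging in Corollary \ref{overlap}. The only point requiring care here is the reduction of an arbitrary (not necessarily maximal) parabolic to the family of maximal parabolics cutting at its block boundaries, together with the observation that the same spartan pair can be tested against all these boundaries at once; once that is in place the conclusion is a routine statement about intervals. In type $A$ the support containment of Proposition \ref{supp} is automatically subsumed, since no sign subtleties arise.
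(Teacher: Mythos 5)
Your proof is correct, and it runs on the same engine as the paper's: Corollary \ref{overlap}, together with the observation that an involution in $\sym(n+1)$ is a product of disjoint $2$-cycles, so once no $2$-cycle straddles a block boundary the involution lies in the Young subgroup. The one place you genuinely diverge is the treatment of non-maximal $W_J$. The paper opens with ``we may assume that $W_J$ is maximal'' and then proves the strong statement that every spartan pair for $w$ in $W$ already lies in the maximal $W_J$; the reduction from a general $W_J$ to this case is left implicit and, strictly speaking, requires an induction (pass to a maximal $W_K \supseteq W_J$, conclude $e(w) = e_K(w)$ because all spartan pairs lie in $W_K$, then repeat inside the type-$A$ factors of $W_K$ via Lemma \ref{basics}(i)). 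You avoid that induction entirely by writing $W_J = \bigcap_j M_j$ as the intersection of the maximal parabolics cutting at its block boundaries and testing one fixed spartan pair $(x,y)$ against every cut simultaneously; each application of Corollary \ref{overlap} is legitimate since $w \in W_J \subseteq M_j$ and the spartan property of $(x,y)$ is a statement about $W$ alone, independent of $j$. This buys a one-pass argument valid for arbitrary $J$, and it recovers directly the stronger fact the paper proves en route, namely that \emph{every} spartan pair for $w$ lies in $W_J$. One minor difference of economy: the paper notes that $x = wy$ puts $x$ and $y$ in the same right $W_J$-coset, so only $y$ need be checked, whereas you check both $x$ and $y$ via \ref{overlap} -- equally valid, since the corollary covers the $2$-cycles of both.
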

\begin{proof} We may assume that $W_J$ is maximal and hence of the form $\sym(1, 2, \ldots, m) \times
\sym(m+1, \ldots, n)$ for $1\leq m \leq n-1$. Let $(x, y) $ be a
spartan pair for $w$. Note that since $x = wy$, $x$ and $y$ lie in
the same right $W_J$-coset. It is therefore enough to show that $y
\in W_J$. By Corollary \ref{overlap}, for every 2-cycle $(a\; b)$ of
$y$, either $\{a, b\} \subseteq \{1, \ldots, m\}$ or $\{a, b\}
\subseteq \{m+1, \ldots, n\}$. Hence $y$ is a product of commuting
reflections each of which lies in $W_J$, forcing $y \in W_J$. Thus
every spartan pair for $w$ is already to be found in $W_J$ and so
$e_J(w) = e(w)$.
\end{proof}\\

\begin{thm} \label{bnexcess} Suppose $W = W(B_n)$ and $w \in W_J \leq W$. Then
$e_J(w) = e(w)$. \end{thm}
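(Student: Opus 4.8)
The plan is to follow the proof of Theorem \ref{anexcess}, the extra work being to control signs. As there, we may assume $W_J$ is maximal, so $W_J=\sym(1,\ldots,m)\times W(m+1,\ldots,n)$ for some $1\le m\le n$ (for type $B$ the case $m=n$, where $W_J=\sym(1,\ldots,n)$, is now permitted). Let $(x,y)$ be a spartan pair for $w$. As $x=wy$, the elements $x$ and $y$ lie in the same right $W_J$-coset, so it suffices to show $y\in W_J$. By Corollary \ref{overlap} each $2$-cycle of $y$ has both points in $\{1,\ldots,m\}$ or both in $\{m+1,\ldots,n\}$; hence $y$ preserves each block and its part on $\{m+1,\ldots,n\}$ already lies in $W(m+1,\ldots,n)$. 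The only way $y$ can fail to lie in $W_J$ is therefore to possess a negative cycle -- a sign change $(\mi a)$ or a negative $2$-cycle $(\mi a\;\mi b)$ -- supported in $\{1,\ldots,m\}$, on which block $w$ acts as a positive permutation.

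I claim no such negative cycle occurs. Since $w^y=w^{-1}$, conjugation by $y$ permutes the $w$-cycles, and by the block constraint it sends block-$1$ cycles to block-$1$ cycles, either fixing one (and inverting it) or interchanging two of equal length; as $y$ is an involution these $y$-orbits have size $1$ or $2$. On a fixed cycle $(\+{a_1}\cdots\+{a_k})$ write $e_{a_i}\cdot y=s_ie_{a_{c-i}}$ (indices mod $k$); the involution law gives $s_i=s_{c-i}$ and $w^y=w^{-1}$ gives $s_i=s_{c+1-i}$, whence $s_j=s_{j+1}$ for all $j$ and the signs are constant. In the interchange case the same conclusion follows from the relations $\rho_{i-1}\rho_i=\eps_{k+1-i}=+1$ obtained in the proof of Proposition \ref{swapcycle} (all $\eps_i=+1$ because $w$ is positive on block $1$). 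Thus on each $y$-orbit $S$ of block-$1$ cycles $y$ is either all-positive, in which case $y|_S\in\sym(1,\ldots,m)$, or all-negative; and since $w$ is positive on $S$, the sign of $e_a\cdot x=(e_a\cdot w)\cdot y$ equals the common $y$-sign, so $x$ behaves on $S$ exactly as $y$ does.

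Suppose for a contradiction that $y$ is all-negative on some block-$1$ orbit $S$, and put $z=\prod_{a\in S}(\mi a)$. Then $z^2=1$, and since $x,y,w$ all preserve $S$ while $z$ is central in the signed-permutation group $W(B_S)$ on the coordinates $S$, $z$ centralizes both $x$ and $y$; moreover $zx$ and $zy$ are the all-positive versions of $x,y$ on $S$. The decisive point is that $\ell(zy)<\ell(y)$ and $\ell(zx)<\ell(x)$. For the roots supported in $S$ this is clear: $z$ is the longest element of $W(B_S)$, so $\ell_{B_S}(y|_S)=|S|^2-\ell_{B_S}((zy)|_S)$, which exceeds $\ell_{B_S}((zy)|_S)$ because $(zy)|_S$ is a product of commuting positive $2$-cycles and hence has $W(B_S)$-length at most $\binom{|S|}{2}<\tfrac12|S|^2$. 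For the positive roots meeting $S$ in a single index a direct count shows that they contribute at least as many elements to $N(y)$ as to $N(zy)$ (the net contribution being $|\{(p,q):p\in S,\,q\notin S,\,p<q\}|\ge 0$), and the same holds with $x$ in place of $y$. Lemma \ref{afterthought}, applied to $z,x,y$, now shows that $(x,y)$ is not a spartan pair, which is absurd.

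Hence $y$ has no negative cycle in block $1$, its block-$1$ part lies in $\sym(1,\ldots,m)$, and $y\in W_J$. Every spartan pair for $w$ therefore already lies in $W_J$, giving $e_J(w)=e(w)$. The step I expect to be the main obstacle is the length inequality for the flip $z$, namely handling the positive roots that meet $S$ in a single index; this is also precisely where the argument is special to types $A$ and $B$, since $z=\prod_{a\in S}(\mi a)$ is available as the long element of $W(B_S)$ only in type $B$, whereas in type $D$ it lies in the group exactly when $|S|$ is even -- which is the source of the $D_n$ counterexamples.
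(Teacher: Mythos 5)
Your proof is correct, and its skeleton is the paper's: reduce to a maximal $W_J$, use Corollary \ref{overlap} to force the $2$-cycles of $y$ to respect the two blocks, and then eliminate sign changes on $\{1,\ldots,m\}$ by exhibiting an involution $z$, a product of sign changes, that centralizes both $x$ and $y$ and strictly shortens both, so that Lemma \ref{afterthought} contradicts spartanness. Where you genuinely diverge is in the choice of $z$ and the proof of the length inequalities. The paper simply takes $S=\{a\leq m \mid e_a\cdot y\in\Phi^-\}$ in one go and $z=\prod_{a\in S}(\mi a)$; then $\ell(zy)<\ell(y)$ is immediate from Lemma \ref{millie} applied one reflection $(\mi a)=r_{e_a}$ at a time (the factors commute and fix the other coordinates, so each successive factor still sends some $e_{a'}$, $a'\in S$, negative), and similarly $\ell(zx)<\ell(x)$ since $e_a\cdot x=(e_a\cdot y)\cdot w^{-1}\in\Phi^-$; the centralizing property comes from your same observation that $a\in S$ implies $aw\in S$, so $S$ is a union of block-$1$ $w$-cycles. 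You instead localize to a single all-negative $y$-orbit of $w$-cycles, which obliges you first to prove sign-constancy on orbits (correct: on a fixed cycle, $wy=yw^{-1}$ gives $s_{i+1}=s_i$, and in the swapped case $\rho_{i-1}\rho_i=\eps_{k+1-i}=+1$), and then to prove the length drop via the longest-element identity in $W(B_S)$ together with the cross-root count. That count is right up to an immaterial factor of $2$: reindexing by the bijection of $S\times S^c$ induced by $y$, the net difference is $2\,|\{(p,q)\in S\times S^c \mid p<q\}|\geq 0$, and the within-$S$ drop is strict because $\binom{|S|}{2}<\tfrac{1}{2}|S|^2$, so your argument closes. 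What your route buys is a sharper structural picture (the signs of $y$ are constant on each orbit of block-$1$ $w$-cycles), which the paper never needs; what it costs is precisely the step you flagged as the obstacle, which in the paper's version evaporates because $e_a\cdot y\in\Phi^-$ for $a\in S$ is the \emph{definition} of $S$, making Lemma \ref{millie} applicable with no root-counting at all. Your closing remark about type $D$ is also on target: the parity obstruction to forming $z=\prod_{a\in S}(\mi a)$ inside $W(D_n)$ is exactly why the paper must argue separately in Theorem \ref{dnexcess} and why the $D_{12}$ counterexample exists.
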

\begin{proof} We may assume that $W_J$ is maximal and hence of the form $\sym(1, 2, \ldots, m) \times
B(m+1, \ldots, n)$ for $1\leq m \leq n$. Let $(x, y)$ be a spartan
pair for $w$. Again we will show that $y \in W_J$. By Corollary
\ref{overlap}, for every 2-cycle $(\arb a\; \arb b)$ of $y$, either
$\{a, b\} \subseteq \{1, \ldots, m\}$ or $\{a, b\} \subseteq \{m+1,
\ldots, n\}$. Let $S = \{a \in \{1, \ldots, m\}\; |\; e_a \cdot y\in
\Phi^-\}$. Assume for a contradiction that $S$ is nonempty. Now
define $z = \prod_{a \in S} (\mi a)$. Repeated applications of Lemma
\ref{millie} show that $\ell(zy) < \ell(y)$ and furthermore
$\ell(zx) < \ell(z)$ (since $e_a\cdot x = e_a\cdot yw^{-1}$ and
$e_a\cdot w^{-1} \in \Phi^+$ for all $a \in \{1, \ldots, m\}$). Note
also that $z$ centralizes $y$. To show that $z$ centralizes $x$,
suppose $a \in S$. Then $e_a\cdot y = -e_b$ for some $b \in \{1,
\ldots, m\}$. So $(e_a\cdot w)\cdot y = e_a \cdot yw^{-1}= -e_b\cdot
w^{-1} \in \Phi^-$. Therefore $a \in S$ implies $aw \in S$ and hence
for any $w$-cycle $(\+ a_1 \cdots \+ a_k)$ with $\{a_1, \ldots,
a_k\} \subseteq \{1, \ldots, m\}$, either $e_{a_i}\cdot z = e_{a_i}$
for all $1\leq i \leq k$ or $e_{a_i}\cdot z = -e_{a_i}$ for all $1
\leq i \leq k$. Consequently $z$ centralizes $w$. We deduce that $z$
centralizes $x$ as well as $y$, and so by Lemma \ref{afterthought},
$(x, y)$ cannot be a spartan pair, a contradiction. Thus $S$ is
empty and $e_a\cdot y \in \Phi^+$ for all $a \in \{1, \ldots, m\}$.
Taken with the fact that the 2-cycles of $y$ do not interchange
elements of $\{1, \ldots, m\}$ and $\{m+1,\ldots, n\}$, this shows
that $y \in W_J$. Hence $x = wy \in W_J$. Thus every spartan pair
for $w$ is contained in $W_J$, which implies $e_J(w) = e(w)$.
\end{proof}\\

\paragraph{Proof of Theorem \ref{parabolicexcess}} By
Lemma \ref{basics}(i), it is enough to prove the result for $W$
an\linebreak  irreducible finite Coxeter group not of type $D_n$.
Types $A_n$ and $B_n$ have been dealt with in Theorems
\ref{anexcess} and \ref{bnexcess}. Theorem \ref{parabolicexcess}
trivially holds for dihedral groups. Types $E_6$, $E_7$, $E_8$,
$F_4$, $H_3$ and $H_4$ have been checked using the computer algebra
package {\sc Magma}\cite{magma}. We discuss the details of these
calculations using $W=W(E_8)$ as an example. If for each maximal
parabolic subgroup $W_J$ of $W$ we know that for all $K \subset J$
and for all $w \in W_K$, $e_J(w) = e_K(w)$, then it suffices to
verify Theorem \ref{parabolicexcess} for all the maximal parabolic
subgroups of $W$ -- that is, that $e_J(w) = e(w)$ for all maximal
parabolic subgroup $W_J$ of $W$. So, for example, if $W_J$ is of
type $A_n$, then we may apply Theorem \ref{anexcess} for $W_K
\subseteq W_J$. However we must beware of standard parabolic
subgroups of $W$ of type $D_n$ -- these must be checked directly.
Among the standard parabolic subgroups of $W$, the most challenging
calculation occurs when $W_J$ is of type $E_7$. Set $E = \{y \mid y
\in W_J$ and $y$ has order greater than $2 \}$. We note that $|E| =
2,892,832$. We need to check that $e(y) = e_J(y)$ for all $y \in E$
(this is because we know that $e(w) = 0 = e_J(w)$ for all $ w \in
W_J \backslash E$). Below we give the {\sc Magma} code which was
used for the calculations in the groups of types $E_6, E_7, E_8,
F_4, H_3$ and $H_4$ (with \verb"ans = {0}"
 being the output obtained in
all cases).  \\

In the routine $H$ denotes the standard parabolic subgroup of the Coxeter group $W$.

\begin{verbatim}
E:={y: y in H |Order(y) gt 2};
ans:={ };for x in E do NH:=Normalizer(H,sub<W|x>);
CW:=Centralizer(W,x);
CH:=Centralizer(H,x);
SH:=Sylow(NH,2);
RH:=sub<W|SH,CH>;
TT:=Transversal(RH,CH);
for i:=1 to #TT do
if x^TT[i] eq x^-1 then inverter:=TT[i];end if;
end for;
CosH:={c*inverter : c in CH};CosW:={c*inverter : c in CW};
tempW:={y: y in CosW |Order(y) eq 2};
tempH:={y: y in CosH |Order(y) eq 2};
lenx:=CoxeterLength(W,x);
minW:=Min({CoxeterLength(W,x*y)+ CoxeterLength(W,y) - lenx : y in tempW});
minH:=Min({CoxeterLength(W,x*y)+ CoxeterLength(W,y) - lenx : y in tempH});
ans:=ans join {minH - minW};end for;
\end{verbatim}

The final result in this section shows that excess does behave well in type $D_n$ with
respect to certain parabolic subgroups and cycle types of elements.

\begin{thm} \label{dnexcess} Suppose $W = W(D_n)$ and $w \in W_J \leq W$, where
$W_J$ is of the form \linebreak $\sym(1, 2, \ldots, m) \times D(m+1,
\ldots, n)$. Write $w = w_1w_2$, where $w_1 \in \sym(1, 2, \ldots,
m)$ and $w_2 \in D(m+1, \ldots, n)$. If either $m=n$, or $w_2$
contains a 1-cycle, or $w_2$ consists only of even, positive cycles,
then $e_J(w) = e(w)$.
\end{thm}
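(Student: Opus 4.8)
The plan is to show, exactly as in the proofs of Theorems \ref{anexcess} and \ref{bnexcess}, that some spartan pair for $w$ already lies inside $W_J$. Since every factorisation $w=xy$ with $x,y\in W_J$ is also a factorisation in $W$, and Coxeter length is unchanged on passing to a standard parabolic subgroup, we have $e(w)\leq e_J(w)$; so it is enough to exhibit one spartan pair $(x,y)$ for $w$, computed in $W=W(D_n)$, with $x,y\in W_J$. Fix any spartan pair $(x,y)$. By Corollary \ref{overlap} no $2$-cycle of $x$ or of $y$ straddles the partition $\{1,\dots,m\}\mid\{m+1,\dots,n\}$, and by Proposition \ref{suppdn} $x$ and $y$ agree off $\supp^+(w)$, any such extra point being a shared negative $1$-cycle. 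Writing $S=\{a\leq m\mid e_a\cdot y\in\Phi^-\}$, I would first reduce the whole theorem to the single claim that $S=\emptyset$: indeed $S=\emptyset$ leaves $y$ with no negative $1$- or $2$-cycle on $\{1,\dots,m\}$, so its restriction there lies in $\sym(1,\dots,m)$, while positivity of $y$ in $W(D_n)$ then forces an even number of negative $1$-cycles on $\{m+1,\dots,n\}$, placing its restriction there in $D(m+1,\dots,n)$; hence $y\in W_J$ and $x=wy\in W_J$.

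The even case is essentially Theorem \ref{bnexcess}. The parity of $|S|$ equals that of the number of negative $1$-cycles of $y$ inside $\{1,\dots,m\}$, since negative $2$-cycles contribute two indices each. When $|S|$ is even, $z=\prod_{a\in S}(\mi a)$ lies in $W(D_n)$, and, exactly as in Theorem \ref{bnexcess}, $a\in S\Rightarrow aw\in S$, so that (as $w=w_1$ has only positive cycles on $\{1,\dots,m\}$) $z$ flips signs over entire $w$-cycles and centralises $w$, hence $x$, as well as $y$, while $\ell(zx)<\ell(x)$ and $\ell(zy)<\ell(y)$. Lemma \ref{afterthought} then contradicts spartanness unless $S=\emptyset$. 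So the real work is entirely in the \emph{odd} case.

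Each of the three hypotheses is designed to dispose of the odd case. If $m=n$ there is nothing on $\{m+1,\dots,n\}$, so $S$ is the full negative support of $y$ and positivity of $y$ makes $|S|$ even automatically. If $w_2$ consists only of even positive cycles, I would prove a cycle-by-cycle lemma: for a single cycle $g=(\+{a_1}\cdots\+{a_k})$ of $w_2$ the vector $v=\sum_i e_{a_i}$ is $g$-fixed, and since $y$ inverts $g$ the vector $v\cdot y$ is again $g$-fixed, forcing $v\cdot y=\pm v$; as $y$ preserves the support of $g$, either $y$ keeps all signs positive (no negative $1$-cycle) or it reverses every sign, in which case the negative $1$-cycles are the fixed points of the underlying involution, evenly many because $k$ is even. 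Cycles swapped in pairs contribute no negative $1$-cycle at all, so $|S|$ is even and we are back in the previous paragraph. If instead $|S|$ is odd, positivity of $y$ supplies a negative $1$-cycle $(\mi c)$ with $c>m$; the favourable situation is $c\notin\supp^+(w)$, i.e. $c$ a fixed point of $w$, for then $e_c\cdot x=(e_c\cdot w)\cdot y=e_c\cdot y=-e_c\in\Phi^-$, so $z'=z\,(\mi c)$ lies in $W(D_n)$ (even support), centralises $x$, $y$ and $w$, and strictly shortens both $x$ and $y$, again contradicting spartanness through Lemma \ref{afterthought}. A $1$-cycle of $w_2$ is what makes such a compensating fixed point available, though confirming that the flip can be placed where $y$ (and hence $x$) acts by $-1$ is a point that must be argued with care.

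The main obstacle is precisely this odd-$|S|$ analysis for $m<n$: because $W(D_n)$ permits sign changes only in even batches, the clean argument of Theorem \ref{bnexcess} breaks, and one must manufacture a single compensating flip at some $c>m$ that simultaneously lies in $W(D_n)$, reduces both $\ell(x)$ and $\ell(y)$, and commutes with $x$, $y$ and $w$. The delicate requirement is $e_c\cdot x\in\Phi^-$, not merely $e_c\cdot y\in\Phi^-$; this fails when $c$ sits in a negative $1$-cycle or a longer cycle of $w$, and controlling it is exactly what the parity lemma for even positive cycles and the availability of a $w$-fixed point from a $1$-cycle of $w_2$ are there to secure. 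That no such device exists without some hypothesis on $w_2$ is shown by the type $D_{12}$ example recorded above, where $e_J(w)=60>46=e(w)$.
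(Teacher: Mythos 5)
Your overall architecture tracks the paper's proof closely: fix a spartan pair, invoke Corollary \ref{overlap}, set $S=\{a\le m \mid e_a\cdot y\in\Phi^-\}$, split on the parity of $|S|$, and kill the bad cases with Lemma \ref{afterthought}. Your handling of the even-positive-cycles subcase is genuinely different from the paper's and is sound: you argue cycle-by-cycle that $v\cdot y=\pm v$ forces $y$ to preserve or reverse all signs on each $y$-invariant cycle of $w_2$, and that an involution inverting an even cycle has $0$ or $2$ fixed points, whereas the paper argues via conjugacy classes (elements of $B(m+1,\ldots,n)\setminus D(m+1,\ldots,n)$ interchange the two $D$-classes of the signed cycle type of $w_2$, contradicting $y_2w_2y_2=w_2^{-1}$ and $w_2\sim w_2^{-1}$ in $D(m+1,\ldots,n)$); your version is more elementary and self-contained. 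But there is a smaller gap in your even-$|S|$ step: you assert $\ell(zx)<\ell(x)$ and $\ell(zy)<\ell(y)$, yet in type $D$ the roots $e_a$ are absent, and the available estimate (at least one of $e_a\pm e_c$ lies in $N(y)$) gives only $\ell(zy)\le\ell(y)$. The paper concedes exactly this weak inequality and adds a fallback: if equality holds, $(zx,zy)$ is \emph{another} spartan pair, now lying in $W_J$, which still yields $e_J(w)=e(w)$. So your reduction of the whole theorem to ``$S=\emptyset$ for the given pair'' is too strong as stated; for $m<n$ strictness can be rescued by counting that \emph{both} roots $e_a\pm e_c$ with $c>m$, $a\in S$ lie in $N(y)$, but for $m=n$ equality can genuinely occur and the paper's fallback is needed.

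The serious gap is the one you yourself flag as unresolved: the odd-$|S|$ case when $w_2$ contains a $1$-cycle. Your device requires a negative $1$-cycle $(\mi{c})$ of $y$ with $c>m$ sitting at a \emph{positive fixed point of $w$}. Parity guarantees such a $c$ with $e_c\cdot y=-e_c$ exists, but says nothing about its location: $c$ may lie at a negative $1$-cycle of $w$ (then $e_c\cdot x=(e_c\cdot w)\cdot y=(-e_c)\cdot y=e_c$, so the extra flip shortens neither $x$ nor anything else) or inside a longer cycle of $w_2$; for instance with $w_2=(\mi{n-1})(\mi{n})$ and $y$ acting as $(\mi{n-1})$ on $\{n-1,n\}$, no favourable $c$ exists at all, even though $w_2$ has $1$-cycles. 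The hypothesis hands you a $1$-cycle of $w_2$, not of $y$. The paper's missing idea is to place the compensating flip at the $w_2$-$1$-cycle point $b$ itself, regardless of how $y$ acts there: with $z=(\mi{b})\prod_{a\in S}(\mi{a})\in W(D_n)$ and $\lambda=|\{e_a\pm e_c \mid a<c\le m,\ a\in S\}|$, one computes $\ell(z)=\lambda+2|S|(n-m)+2(n-b)$ while $|N(z)\cap N(y)|\ge\frac{1}{2}\lambda+2|S|(n-m)$, and since $b\ge m+1$ gives $|S|(n-m)>n-b$, this forces $\ell(zy)<\ell(y)$ and similarly $\ell(zx)<\ell(x)$: the cost of flipping at $b$ is strictly outweighed by the surplus from $S$, because every root $e_a\pm e_c$ with $a\in S$, $c>m$ lies in $N(y)$. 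Without this counting argument (or an equivalent), your proof of precisely the hardest case of Theorem \ref{dnexcess} does not close.
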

\begin{proof} Let $(x, y)$ be a spartan
pair for $w$. We will show that $y \in W_J$. By Corollary
\ref{overlap}, for every 2-cycle $(\arb a\; \arb b)$ of $y$, either
$\{a, b\} \subseteq \{1, \ldots, m\}$ or $\{a, b\} \subseteq \{m+1,
\ldots, n\}$. Let $S = \{a \in \{1, \ldots, m\}\; |\;  e_a\cdot y
\in \Phi^-\}$.

We first consider the case where $|S|$ is even. Define $z =
\prod_{a \in S} (\mi a) \in W$. Now
$$N(z) = \{e_a \pm e_b\; |\; 1\leq a < b \leq n, a \in S\}.$$
Since $e_a\cdot y \in \Phi^-$ for every $a \in S$, at least one of
$e_a + e_b$ and $e_a - e_b$ will be in $N(y)$ for all $b > a$.
Therefore $\ell(zy) = \ell(z) + \ell(y) - 2|N(z) \cap N(y)| \leq
\ell(y)$. Similarly $\ell(zx) \leq \ell(z)$. Moreover, by the same
reasoning as that in the proof of Theorem \ref{bnexcess}, $z$
centralizes both $x$ and $y$. Therefore, by Lemma
\ref{afterthought}, we either have a contradiction (forcing $S$ to
be empty and $x, y \in W_J$) or another spartan pair $(zx, zy)$,
this time contained in $W_J$.
Hence $e_J(w) = e(w)$.\\

We are left with the possibility that $|S|$ is odd. If $m=n$, then
since $y \in W(D_n)$ we must have $|S|$ even. So $m < n$. Suppose,
for a contradiction, that $w_2$ consists only of even, positive
cycles. By thinking of $W$ as a subgroup of $W(B_n)$, we can write
$y = y_1y_2$, where $\supp^+(y_1) \subseteq \{1, \ldots, m\}$ and
$\supp^+(y_2) \subseteq \{m+1, \ldots, n\}$. Since $|S|$ is odd,
$y_1$ contains an odd number of sign changes, and since $y \in
W(D_n)$, $y_2$ must also contain an odd number of sign changes.
However $y_2w_2y_2 = w_2^{-1}$. (This is because for every 2-cycle
$(\arb a\; \arb b)$ of $y$, either $\{a, b\} \subseteq \{1, \ldots,
m\}$ or $\{a, b\} \subseteq \{m+1, \ldots, n\}$.) But $w_2$ consists
only of positive, even cycles. This means there are two conjugacy
classes in $D(m+1, \ldots, n)$ with that signed cycle type, but only
one in $B(m+1, \ldots, n)$. Therefore any element of $B(m+1,\ldots,
n)\setminus D(m+1,\ldots, n)$ (such as $y_2$) must interchange the
conjugacy classes. This contradicts the fact that $w_2^{-1}$ is
conjugate in $D(m+1, \ldots, n)$ to $w_2$. Hence if $w_2$ consists
only of even, positive cycles, $|S|$ must be even.

Therefore the only possibility remaining is that $w_2$ contains a
1-cycle $(\+ b)$ or $(\mi b)$. Define $z = (\mi b)\prod_{a \in S}
(\mi a) \in W$. Now
$$N(z) = \{e_a \pm e_c\; |\; 1\leq a < c \leq n, a \in S\} \cup \{e_b \pm e_c\; |\; b < c \leq n\}.$$
Let $\lambda = |\{e_a \pm e_c \; | \; 1 \leq a < c \leq m, a \in
S\}|$. Then $|N(z)| = \lambda + 2|S|(n-m) + 2(n-b)$.
 Now for $c>m$ and $a \in S$, $e_a + e_c \in N(y)$ and $e_a -
e_c \in N(y)$. For $c \leq m$, at least one of $e_a + e_c$, $e_a -
e_c \in N(y)$. Hence \begin{eqnarray*} |N(z) \cap N(y)| &\geq&
\textstyle\frac{1}{2}\lambda + 2|S|(n-m)= \textstyle\frac{1}{2}(\lambda + 4|S|(n-m))\\
&>& \textstyle\frac{1}{2}\left(\lambda + 2|S|(n-m) + 2(n-b)\right)
= \ell(z).\end{eqnarray*} %
Hence $\ell(zy) < \ell(y)$. Similarly $\ell(zx) < \ell(x)$. An
almost identical argument to that in the proof of Theorem
\ref{bnexcess} shows that $z$ centralizes both $x$ and $y$. By Lemma
\ref{afterthought} $(x, y)$ cannot be a spartan pair, a
contradiction. Hence $S$ is empty. This implies that $x, y \in W_J$
and hence $e_J(w) = e(w)$.
\end{proof}

\section{The Set of Roots $N(\I_w)$}

The main result of this section is the proof of Theorem \ref{nwni}.
First we look at the case when $w$ is a cuspidal element. We require
a technical lemma before we can start the proof of Proposition
 \ref{cusplen}.\\

\begin{lemma}\label{tech} Suppose $w \in W$ and let $\Omega_1, \ldots, \Omega_k$ be the
set of $\langle w \rangle$-orbits on $\Phi$. For $x \in
\mathcal{I}_w$, if $\Omega_i\cdot x \cap \Phi^- \neq \emptyset$,
then $\Omega_i \cap \Phi^+ \subseteq N(\mathcal{I}_w)$.
\end{lemma}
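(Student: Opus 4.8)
The plan is to show that every positive root of $\Omega_i$ is carried into $\Phi^-$ by \emph{some} element of $\I_w$ (not necessarily $x$ itself), by exploiting a whole family of inverting involutions manufactured from $x$ and the powers of $w$.

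The construction I would record first is that $xw^t \in \I_w$ for every $t \in \Z$. Since $x^2 = 1$ and $w^x = w^{-1}$ we have $xwx = w^{-1}$, and hence $xw^tx = (xwx)^t = w^{-t}$. This gives $(xw^t)^2 = (xw^tx)w^t = w^{-t}w^t = 1$, so $xw^t$ is an involution, while $(xw^t)^{-1}w(xw^t) = w^{-t}(xwx)w^t = w^{-t}w^{-1}w^t = w^{-1}$, so $xw^t$ inverts $w$. Thus $\{xw^t \mid t \in \Z\} \subseteq \I_w$.

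Next I would observe that $\Omega_i \cdot x$ is itself a single $\langle w \rangle$-orbit. Fixing $\alpha \in \Omega_i$, every element of $\Omega_i$ has the form $\alpha \cdot w^s$, and $(\alpha \cdot w^s)\cdot x = \alpha \cdot x \cdot (xw^sx) = (\alpha \cdot x)\cdot w^{-s}$; hence $\Omega_i \cdot x$ is exactly the $\langle w \rangle$-orbit of $\alpha \cdot x$. In particular the hypothesis $\Omega_i \cdot x \cap \Phi^- \neq \emptyset$ says precisely that this orbit contains a negative root. Now fix any positive root $\alpha \in \Omega_i \cap \Phi^+$. Applying the involutions above gives $\alpha \cdot (xw^t) = (\alpha \cdot x)\cdot w^t$, and as $t$ ranges over $\Z$ these images run through the entire $\langle w \rangle$-orbit of $\alpha \cdot x$, namely $\Omega_i \cdot x$. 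Since $\Omega_i \cdot x$ meets $\Phi^-$, there is a $t$ with $\alpha \cdot (xw^t) \in \Phi^-$, and because $xw^t \in \I_w$ this yields $\alpha \in N(xw^t) \subseteq N(\I_w)$. As $\alpha$ was arbitrary, $\Omega_i \cap \Phi^+ \subseteq N(\I_w)$.

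The one genuine hurdle is the first step. The naive attempt, namely to repair the sign of $\alpha \cdot x$ by replacing $x$ with a conjugate $w^{-m}xw^m \in \I_w$, is too weak: such a conjugate sends $\alpha$ to $-(\gamma \cdot w^m)$ for some positive root $\gamma$, and the sign of $\gamma \cdot w^m$ is not under our control (indeed, using $xw^m = w^{-m}x$ one sees the conjugates $w^{-m}xw^m = w^{-2m}x$ reach only the even-power members of the coset family). The decisive move is instead to enlarge the pool of inverting involutions to the full coset family $\{xw^t\}$; once these are seen to lie in $\I_w$, the orbit-covering argument is immediate and entirely formal.
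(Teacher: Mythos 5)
Your proof is correct and is essentially the paper's own argument: the paper likewise uses the coset of inverting involutions (written there as $w^jx$, which equals $xw^{-j}$, so it is exactly your family $\{xw^t\}$) and picks the power that carries a given positive root of $\Omega_i$ onto the negative root guaranteed by the hypothesis. Your version just makes explicit the two facts the paper leaves implicit, namely that $w^jx \in \I_w$ and that $\Omega_i\cdot x$ is a single $\langle w\rangle$-orbit.
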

\begin{proof} Suppose that $\Omega_i\cdot x
\cap \Phi^- \neq \emptyset$. Then for any $\alpha \in \Omega_i$,
there is an integer $j$ such that $\alpha \cdot w^jx\in \Phi^-$. If
$\alpha \in \Phi^+$, then $\alpha \in N(w^jx)$. Since $w^jx\in
\mathcal{I}_w$, we get $\alpha \in N(\mathcal{I}_w)$. This holds for
each $\alpha \in \Omega_i \cap \Phi^+$, so the lemma is proved.
\end{proof}\\

\begin{prop} \label{cusplen} If $w$ is a cuspidal element of $W$, then $N(\mathcal{I}_w) = \Phi^+$.
\end{prop}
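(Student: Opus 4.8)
The plan is to show that for a cuspidal element $w$, every positive root lies in $N(\I_w)$. The natural strategy is to use Lemma \ref{tech}, which reduces the problem to showing that for each $\langle w\rangle$-orbit $\Omega_i$ on $\Phi$, there exists some $x \in \I_w$ with $\Omega_i \cdot x \cap \Phi^- \neq \emptyset$. Indeed, since $\I_w$ is non-empty, fix any $x_0 \in \I_w$; then Lemma \ref{tech} tells us that $\Omega_i \cap \Phi^+ \subseteq N(\I_w)$ provided merely that $\Omega_i \cdot x_0 \cap \Phi^- \neq \emptyset$. Since $\Phi^+ = \bigcup_i (\Omega_i \cap \Phi^+)$, it will suffice to verify this condition for every orbit $\Omega_i$ that meets $\Phi^+$.

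First I would observe what can go wrong: an orbit $\Omega_i$ fails the hypothesis of Lemma \ref{tech} with respect to $x_0$ precisely when $\Omega_i \cdot x_0 \subseteq \Phi^+$, i.e. $x_0$ sends the whole orbit into the positive roots. Because $x_0$ is an involution with $x_0^2 = 1$, and because $\I_w$ is closed under the maps $x \mapsto w^j x$ (as used in Lemma \ref{tech}), the relevant question is whether the collection of orbits that are "always kept positive'' by every element of $\I_w$ can be nonempty. The key structural idea I would pursue is that an orbit $\Omega_i$ on which $\langle w\rangle$ acts while staying within $\Phi^+$ under all inverting involutions would force $w$ to fix a corresponding subspace, and hence to lie in a proper parabolic subgroup, contradicting cuspidality. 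Concretely, if some positive root $\alpha$ satisfies $\alpha \cdot w^j x \in \Phi^+$ for all $j$ and all $x \in \I_w$, one expects this to pin down a $w$-invariant (indeed $\langle w, \I_w\rangle$-invariant) set of roots spanning a proper root subsystem, so that $w$ stabilizes a proper parabolic and is therefore not cuspidal.

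The cleanest route is likely through the eigenspace/fixed-space characterization: cuspidality of $w$ is equivalent to $V_1(w) = 0$ (a cuspidal element fixes no nonzero vector, since a nontrivial fixed space would place $w$ inside the pointwise stabilizer of that space, a proper parabolic subgroup). I would then argue that if $N(\I_w) \neq \Phi^+$, there is a positive root $\beta \notin N(\I_w)$, meaning $\beta \cdot x \in \Phi^+$ for every $x \in \I_w$, and in particular (taking $x, w^jx \in \I_w$) the entire $\langle w\rangle$-orbit of $\beta$ stays positive under a fixed inverting involution. Summing over the orbit, the vector $\sum_{j} \beta\cdot w^j$ is a nonzero $w$-fixed vector lying in the positive cone, giving $V_1(w) \neq 0$ and contradicting cuspidality. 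This is the heart of the argument.

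The main obstacle I anticipate is the passage from "the orbit stays positive'' to "there is a genuine nonzero fixed vector'', and ensuring that the averaged vector $\sum_j \beta \cdot w^j$ is actually nonzero rather than accidentally zero. Since all summands are positive roots lying strictly on one side of a hyperplane, their sum cannot vanish (a nontrivial nonnegative combination of vectors in an open half-space is nonzero), so this should go through, but it requires care in handling the $\langle w\rangle$-action on roots versus vectors and confirming the sum is $w$-invariant. A secondary subtlety is making sure the chosen $x \in \I_w$ interacts correctly with the orbit decomposition so that Lemma \ref{tech} applies orbit-by-orbit; I would handle this by contrapositive, assuming some positive $\beta \notin N(\I_w)$ and deriving the forbidden fixed vector, which sidesteps the need to treat each orbit individually.
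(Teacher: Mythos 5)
Your overall strategy is the paper's own, run contrapositively: twisted $\langle w\rangle$-orbits, orbit sums as $w$-fixed vectors, and the fact that the stabilizer of a nonzero vector is a proper parabolic subgroup, so that cuspidality forces $V_1(w)=0$. However, there is a genuine misstep at exactly the point you flagged as the main obstacle. From $\beta\notin N(\I_w)$ and $w^jx\in\I_w$ you correctly conclude that $(\beta\cdot w^j)\cdot x=\beta\cdot(w^jx)\in\Phi^+$ for all $j$; that is, the orbit of $\beta$ stays positive \emph{after applying} $x$. But you then assert that the summands $\beta\cdot w^j$ themselves are positive roots and that $v=\sum_j\beta\cdot w^j$ lies in the positive cone. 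That does not follow from anything you have established: the hypothesis $\beta\notin N(\I_w)$ controls the sign of $\beta\cdot y$ only for $y\in\I_w$, and $w^j$ is in general not in $\I_w$. Writing $w^j=(w^jx)x$ does not rescue it, since $\beta\cdot(w^jx)$ is a positive root but need not avoid $N(\I_w)$, so its further image under $x$ is uncontrolled. Hence your argument that $v\neq 0$ (``all summands are positive roots lying strictly on one side of a hyperplane'') rests on an unproven claim.

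The gap is fixable in one line, and the fix recovers the paper's argument. Either observe that $\Omega\cdot x$ (where $\Omega$ is the $\langle w\rangle$-orbit of $\beta$) is itself a $\langle w\rangle$-orbit, because $\alpha\cdot xw^j=\alpha\cdot w^{-j}x$, so that $u=\sum_{\gamma\in\Omega\cdot x}\gamma$ is a $w$-fixed vector which is a nonempty sum of positive roots, hence nonzero; or keep your $v=\sum_j\beta\cdot w^j$, which is $w$-fixed, and note that $v\cdot x=\sum_j\beta\cdot(w^jx)$ is a sum of positive roots, hence nonzero, so $v\neq 0$ because $x$ acts invertibly on $V$. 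Either way $V_1(w)\neq 0$, contradicting cuspidality. For comparison, the paper argues in the direct rather than contrapositive direction: cuspidality forces \emph{every} orbit sum to vanish, hence every $\langle w\rangle$-orbit --- in particular every $\Omega_i\cdot x$ --- contains a negative root, and Lemma \ref{tech} then gives $\Omega_i\cap\Phi^+\subseteq N(\I_w)$ for all $i$; your contrapositive organization, once patched, is sound and in fact bypasses Lemma \ref{tech} altogether.
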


\begin{proof} Assume that $w$ is cuspidal in $W$ and let $\Omega_1, \ldots,
\Omega_k$ be the set of $\langle w \rangle$-orbits on $\Phi$. For $x
\in \mathcal{I}_w$, suppose $\beta \in \Omega_i\cdot x$ and write
$\beta = \alpha\cdot x$. Then for any integer $j$, $\beta \cdot w^j=
\alpha \cdot xw^j= \alpha \cdot w^{-j}x\in \Omega_i\cdot x$.
Therefore $\Omega_i\cdot x$ is also a $\langle w \rangle$-orbit.
Suppose $\Omega = (\beta, \beta\cdot w, \ldots, \beta\cdot w^k)$ is
any $\langle w \rangle$-orbit of roots. Then $v = \sum_{i=0}^k
\beta\cdot w^i$ is a fixed point of $w$. It is well-known (and
follows from Ch V \S 3.3 of \cite{titsref}) that the stabilizer of
any non-zero $v \in V$ is a proper parabolic subgroup of $W$. Since
$w$ is cuspidal, therefore, we must have $v = 0$. Hence $\Omega$
must contain both positive and negative roots. In particular, for
$1\leq i \leq k$, $\Omega_i\cdot x$ must contain at least one
negative root. Therefore, by Lemma \ref{tech}, $\Omega_i \cap \Phi^+
\subseteq N(\mathcal{I}_w)$ for $1\leq i \leq k$. Hence
$N(\mathcal{I}_w) = \Phi^+$, and the result holds.\end{proof}

\begin{lemma} \label{centre} Suppose $W$ has a non-trivial centre. Then for all
$w \in W, N(\mathcal{I}_w) = \Phi^+$. \end{lemma}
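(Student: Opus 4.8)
The plan is to exploit a single central involution acting as $-1$ on the whole of $V$, using it to pair up inverting involutions so that between them they push every positive root into $\Phi^-$. As recorded in the introduction, $\I_w$ is non-empty for every $w$, so the entire task is to show that the positive roots sent negative by the various members of $\I_w$ exhaust $\Phi^+$; the reverse inclusion $N(\I_w)\subseteq\Phi^+$ is immediate from the definition of $N$.

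First I would pin down the central element. When $W$ is irreducible a non-trivial centre is forced to be $\{1,w_0\}$, where $w_0$ is the longest element, and in exactly this situation $w_0$ acts as $-1$ on $V$, that is $\alpha\cdot w_0 = -\alpha$ for every $\alpha\in\Phi$; this is the standard characterisation of when $-1\in W$ (see for instance \cite{humphreys}). Writing $z$ for this central involution, the two features I shall use repeatedly are that $z$ is central with $z^2=1$ and that $\alpha\cdot z = -\alpha$ for all roots $\alpha$.

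The heart of the proof is a doubling device. Fix any $x\in\I_w$. Since $z$ is central, $zx$ is again an involution, as $(zx)^2 = z^2x^2 = 1$, and it again inverts $w$, since $w^{zx} = x^{-1}(z^{-1}wz)x = x^{-1}wx = w^{-1}$; hence $zx\in\I_w$ as well. Now take an arbitrary $\alpha\in\Phi^+$. Because the action is on the right and $z$ acts as $-1$, we have $\alpha\cdot(zx) = (\alpha\cdot z)\cdot x = -(\alpha\cdot x)$, so exactly one of $\alpha\cdot x$ and $\alpha\cdot(zx)$ lies in $\Phi^-$. In either case $\alpha$ is sent negative by a member of $\I_w$, whence $\alpha\in N(\I_w)$. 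As $\alpha$ was an arbitrary positive root this yields $\Phi^+\subseteq N(\I_w)$, and together with the trivial reverse inclusion we conclude $N(\I_w)=\Phi^+$.

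The only genuine obstacle is the structural input that the non-trivial central involution acts as $-1$ on all of $V$; the doubling computation itself is routine. This hypothesis is essential rather than cosmetic: it is precisely what guarantees that $z$ negates \emph{every} positive root at once, so that the single pair $\{x,zx\}$ already handles $\alpha$. Accordingly I would invoke the lemma with $z=w_0=-1$, the interpretation forced by the irreducible case, and note that a central element acting as $-1$ on only some of the irreducible constituents would leave the remaining positive roots uncontrolled.
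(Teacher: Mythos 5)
Your proof is correct and follows essentially the same route as the paper: both pair each $x \in \I_w$ with the product $w_0x \in \I_w$ and show $N(w_0x) = \Phi^+ \setminus N(x)$, so that $N(x) \cup N(w_0x) = \Phi^+$. The only cosmetic difference is that you get this identity directly from the $-1$ action of the central involution (hence your explicit, and sensible, restriction to the irreducible case), whereas the paper derives it from the formula of Lemma \ref{lengthadd} using $N(w_0) = \Phi^+$ --- equivalent inputs, since the longest element is central exactly when it acts as $-1$ on $V$.
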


\begin{proof} Let $w_0$ be the non-trivial central involution. Let
$x \in \mathcal{I}_w$. Then $xw_0 \in \mathcal{I}_w$. Now %
\begin{eqnarray*} N(xw_0) &=& N(x)\setminus [-N(w_0)\cdot x] \cup
\left[ N(w_0) \setminus N(x)\right]\cdot x\\
&=& N(x) \setminus [\Phi^-\cdot x] \cup \left[\Phi^+\setminus
N(x)\right]\cdot x\\
&=& \emptyset \cup \Phi^+ \setminus N(x) = \Phi^+ \setminus N(x).
\end{eqnarray*}
Hence $N(\mathcal{I}_w) \supseteq N(xw_0) \cup N(x) = \Phi^+$.
Therefore $N(\mathcal{I}_w) = \Phi^+$.\end{proof}\\

\begin{lemma} \label{an} Suppose $W$ is of type $A_{n-1}$. Then for all
$w \in W$, $N(w) \subseteq N(\mathcal{I}_w)$.\end{lemma}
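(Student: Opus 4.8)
The plan is to translate the statement into the combinatorics of inversions and then, for each inversion of $w$, exhibit an explicit inverting involution that realizes it. Since $W = \sym(n)$ acts by $e_i\cdot w = e_{iw}$, a positive root $e_i - e_j$ (with $i<j$) lies in $N(w)$ exactly when $iw > jw$. Thus $N(w)$ is the set of inversions of $w$, and what I must show is that every inversion of $w$ is an inversion of some $x\in\I_w$.

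The basic tool will be the dihedral reflections of a single cycle. For a $w$-cycle $c = (a_1\,a_2\,\cdots\,a_k)$ and an index $s$, let $\rho_{c,s}$ fix every point outside $\{a_1,\ldots,a_k\}$ and send $a_t\mapsto a_{s-t}$ (subscripts mod $k$). A direct check shows $\rho_{c,s}$ is an involution with $\rho_{c,s}^{-1}c\,\rho_{c,s} = c^{-1}$, and that by choosing $s$ I can send any prescribed $a_p$ to any prescribed $a_q$; in particular I can swap two chosen elements of the cycle, or map a chosen element to its $w$-image. Taking one such reflection on each cycle of $w$ (the identity on $1$-cycles) and multiplying, I obtain, for any choice of the individual reflections, an involution $x$ with $x^{-1}wx = w^{-1}$, i.e. $x\in\I_w$. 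This reduces the problem to choosing the per-cycle reflections well.

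Now I would fix an inversion, i.e. $\alpha = e_i - e_j\in N(w)$ with $i<j$ and $iw>jw$, and let $c_i, c_j$ be the $w$-cycles through $i$ and $j$. If $c_i = c_j$, I take the reflection of that cycle which swaps $i$ and $j$ (with arbitrary reflections on the remaining cycles); then $ix = j > i = jx$, so $\alpha\cdot x\in\Phi^-$ and $\alpha\in N(x)\subseteq N(\I_w)$. If $c_i\neq c_j$, I reflect $c_i$ and $c_j$ independently, choosing the reflection of $c_i$ to send $i\mapsto iw$ and that of $c_j$ to send $j\mapsto jw$; then $ix = iw > jw = jx$, again giving $\alpha\in N(\I_w)$. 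As $\alpha$ is an arbitrary element of $N(w)$, this yields $N(w)\subseteq N(\I_w)$.

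The one place that needs care is the cross-cycle case. Here one cannot in general swap $i$ and $j$ by an inverting involution, since that would force $c_i$ and $c_j$ to have equal length and be interchanged, which need not be possible. The resolution is exactly the hypothesis $iw>jw$: because $iw$ and $jw$ lie in $c_i$ and $c_j$ respectively, and distinct cycles have disjoint support so may be reflected independently, I can place $ix = iw$ above $jx = jw$. Thus the inequality defining the inversion is precisely what guarantees that a suitable independent choice exists. I expect this to be the main (and essentially the only) obstacle; the same-cycle case and the verification that the product of cycle-reflections inverts $w$ are routine.
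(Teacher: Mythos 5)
Your proof is correct and takes essentially the same approach as the paper: both decompose $w$ into disjoint cycles, invert each cycle by a dihedral reflection, and split into the same two cases --- swapping $i$ and $j$ when they lie in a common cycle, and choosing reflections with $i\mapsto iw$ and $j\mapsto jw$ when they lie in different cycles, so that $(e_i-e_j)\cdot x=(e_i-e_j)\cdot w\in\Phi^-$. Your uniform parametrization $\rho_{c,s}$ is merely a cleaner packaging of the paper's explicit involutions $\sigma_{\lambda}$ and $\sigma'_{\lambda}$, which it selects by a ``judicious choice'' of the labelling and a parity case split.
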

\begin{proof} Now $W$ is isomorphic to Sym($n$), so we may write $w$
as a product of disjoint cycles. Consider a cycle $\lambda =
(a_1a_2\ldots a_m)$ of $w$. Let $\Lambda = \{a_1, \ldots, a_m\}$.
We will define $\sigma_{\lambda}$ and $\sigma'_{\lambda} \in
\sym_{\Lambda} \cap \mathcal{I}_{\lambda}$.  If $m$ is odd, let
$\sigma_{\lambda} = \sigma'_{\lambda} = (a_2 a_m)(a_3 a_{m-1})
\cdots (a_{(m+1)/2} a_{(m+3)/2})$. If $m$ is even, let
$\sigma_{\lambda} = (a_1 a_m)(a_2 a_{m-1})(a_3 a_{m-2}) \cdots
(a_{m/2} a_{m/2 +1})$ and $\sigma'_{\lambda} = (a_2 a_m)(a_3
a_{m-1}) \cdots (a_{m/2} a_{m/2+2})$. Note that every involution
in $\sym_{\Lambda} \cap \mathcal{I}_{\lambda}$ is of the form
$\sigma_{\lambda}$ or $\sigma'_{\lambda}$ for some choice of
$a_1$. Further, note that $\sigma_{\lambda}$ contains the 2-cycle
$(a_{\lceil m/2\rceil} a_{\lceil m/2\rceil+1})$, which is of the
form $(a\; aw)$. \\

Let $e_a-a_b \in N(w)$. Suppose $a$ and $b$ appear in the same cycle $\lambda$ of $w$.
Write $w = \lambda w'$ where $\supp(w') \cap \Lambda = \emptyset$. Let $\tau$ be an
arbitrary element of $\mathcal{I}_{w'} \cap \sym(\supp(w'))$. If $a, b$ are separated by
an even number of $a_i$, then by a judicious choice of $a_1$, we can ensure that $(ab)$
is a 2-cycle of $\sigma_{\lambda}$. Otherwise we can ensure that $(ab)$ is a 2-cycle of
$\sigma'_{\lambda}$. Let $x = \sigma_{\lambda}\tau$ or $\sigma'_{\lambda}\tau$
accordingly.
Then $x \in \mathcal{I}_w$ and $e_a-e_b \in N(x)$.\\
Now suppose that $a$ and $b$ appear in different cycles $\lambda_1$
and $\lambda_2$ of $w$. We may choose $\sigma_{\lambda_1}$ and
$\sigma_{\lambda_2}$ such that $\sigma_{\lambda_1}$ contains the
2-cycle $(a\; aw)$ and $\sigma_{\lambda_2}$ contains the 2-cycle
$(b\; bw)$. Writing $w = \lambda_1\lambda_2 w'$ and choosing any
$\tau \in \mathcal{I}_{w'} \cap \sym(\supp(w'))$ we see that for $x
= \sigma_{\lambda_1}\sigma_{\lambda_2}\tau \in \mathcal{I}_w$, $(e_a
- e_b)\cdot x = (e_a - e_b)\cdot w \in \Phi^-$. Hence $e_a - e_b \in
N(w)$ and therefore $N(w) \subseteq N(\mathcal{I}_w)$.
\end{proof}\\

Note that there are examples in type $A_{n-1}$ where
$N(\mathcal{I}_w) \neq \Phi^+$. For instance, in $W(A_6) \cong
\sym(7)$, for $w = (1234)(567)$ we have $e_4-e_5 \notin
N(\mathcal{I}_w)$.

\begin{prop} \label{dn} Let $W$ be of type $D_n$. For all $w \in W$, $N(w)
\subseteq N(\mathcal{I}_w)$.\end{prop}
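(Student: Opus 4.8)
The plan is to establish, for every $\alpha \in N(w)$, the existence of some $x \in \I_w$ with $\alpha \in N(x)$; since $N(\I_w) = \bigcup_{x \in \I_w} N(x)$, this yields $N(w) \subseteq N(\I_w)$. Two cases can be cleared at once. If $n$ is even then $-1 = \prod_{i=1}^{n}(\mi i)$ is a non-trivial central element of $W(D_n)$, so Lemma \ref{centre} gives $N(\I_w) = \Phi^+$ and we are done; thus I would assume $n$ odd. If $w$ is cuspidal -- equivalently, every cycle of $w$ is negative -- then Proposition \ref{cusplen} again gives $N(\I_w) = \Phi^+$, so I may further assume that $w$ has a fixed point or a positive cycle. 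I also record the elementary but crucial fact that, $n$ being odd, $w$ must possess at least one cycle of odd length.

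Now fix a root $\alpha \in N(w)$, so $\alpha = e_a - e_b$ or $\alpha = e_a + e_b$ with $a < b$. Following the template of Lemma \ref{an}, I would write $w$ as a product of disjoint signed cycles and invert each cycle \emph{within its own support} by an involution built from cycles of the permissible forms $(\+{a}\,\+{b})$, $(\mi{a}\,\mi{b})$, $(\mi{a})$; the product $x_0$ of these then lies in $\I_w$ and preserves the support of every $w$-cycle. To force $\alpha\cdot x_0 \in \Phi^-$ I would, exactly as in Lemma \ref{an}, either include the relevant $2$-cycle among the transpositions of the reflecting involution -- a positive $2$-cycle $(\+{a}\,\+{b})$ sends $e_a - e_b$ into $\Phi^-$, a negative one $(\mi{a}\,\mi{b})$ sends $e_a + e_b$ into $\Phi^-$ -- when $a$ and $b$ share a cycle, or, when they lie in different cycles, arrange $e_a\cdot x_0 = e_a\cdot w$ and $e_b\cdot x_0 = e_b\cdot w$, so that $\alpha\cdot x_0 = \alpha\cdot w \in \Phi^-$ because $\alpha \in N(w)$.

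The observation that organizes the signed roots is this: $e_a + e_b$ can lie in $N(w)$ only when at least one of $a, b$ lies in a negative cycle, since if $c$ lies in a positive cycle or is fixed then $e_c\cdot w$ is again a positive basis vector, forcing $(e_a+e_b)\cdot w \in \Phi^+$. Hence the sign changes needed to capture $e_a+e_b$ are always available inside the cycles through $a$ and $b$, and they come in even number because $w \in W(D_n)$ has an even number of negative cycles. The genuine work is to ensure that the involution so produced lies in $W(D_n)$, that is, carries an even number of sign changes. For this I would exploit two forms of flexibility that do not disturb $\alpha\cdot x \in \Phi^-$: multiplication by $s_\lambda = \prod_{c \in \supp(\lambda)}(\mi c)$ for a $w$-cycle $\lambda$ disjoint from $\{a,b\}$ -- this centralizes $w$, commutes with $x_0$ (as $x_0$ fixes $\supp(\lambda)$ setwise), and flips the sign-change parity precisely when $\lambda$ has odd length -- together with re-selection of the inverting involution on a negative cycle, every such cycle admitting inverters of both parities.

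The hard part, and the crux of the argument, is this parity bookkeeping. Because $n$ is odd there is always an odd-length cycle to supply a parity-flipping $s_\lambda$, and because negative cycles occur in even number there is room to rebalance signs; choosing the cycle to lie off $\{a,b\}$ keeps $\alpha$ captured. I expect the main obstacle to be the residual configurations in which every odd-length cycle \emph{and} every negative cycle meets the two-element set $\{a, b\}$: these few shapes of $w$ would have to be treated by direct, hands-on construction of the inverter. This even-sign-change constraint is precisely the feature that sets type $D$ apart elsewhere in the paper (compare the remark following Proposition \ref{suppdn} and the hypotheses of Theorem \ref{dnexcess}).
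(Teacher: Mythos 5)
Your opening reductions are correct and genuinely useful: for $n$ even the element $\prod_{i=1}^n(\mi{i})$ is central so Lemma \ref{centre} applies, and an element all of whose cycles are negative fixes no nonzero vector, hence is cuspidal, so Proposition \ref{cusplen} applies (the paper uses neither shortcut). The scheme that follows --- one inverting involution per cycle, supported on that cycle, chosen to capture $\alpha$, then sign-change parity bookkeeping --- is essentially the paper's own, implemented there via its Lemma \ref{technical}. But two things go wrong. First, your ``organizing observation'' is false: positivity of a cycle is a \emph{parity} condition on its minus signs, not the absence of minus signs. For example the $3$-cycle $g=(\+{1}\,\mi{2}\,\mi{3})$, with $e_1\cdot g=e_2$, $e_2\cdot g=-e_3$, $e_3\cdot g=-e_1$, is positive, yet $(e_2+e_3)\cdot g=-(e_1+e_3)\in\Phi^-$, so $e_2+e_3\in N(g)$ although neither $2$ nor $3$ lies in a negative cycle. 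So roots $e_a+e_b$ must be captured in configurations your observation excludes. (A smaller repair is needed in the same-cycle case too: you cannot decree whether $(\+{a}\,\+{b})$ or $(\mi{a}\,\mi{b})$ occurs in an inverter, since the sign decorations of an inverter's $2$-cycles are dictated by the signs of the cycle it inverts; the paper circumvents this by noting $\alpha\cdot\tau=\pm\alpha$ and replacing $\tau$ by $\tau g$ when necessary.)

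Second, and decisively, the ``residual configurations'' you defer are not a fringe to be handled by unspecified hands-on work: they are nonempty and they are exactly the crux of the proposition. Take $a\in\supp(g_1)$, $b\in\supp(g_2)$ with $g_1$ odd and negative, $g_2$ even and negative, and every other cycle of $w$ even and positive (such $w$ lie in $W(D_n)$, the negative cycles being two in number; e.g.\ $w=(\mi{1})(\+{2}\,\mi{3})(\+{4}\,\+{5})$ with $\alpha=e_1+e_2$). An even positive cycle admits \emph{only} positive within-support inverters (they form the coset $x_0\langle g,z\rangle$ with $x_0$, $g$, $z$ all positive), so no parity repair is available off $\{a,b\}$. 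On $g_1$ the within-support inverter satisfying $e_a\cdot x=e_a\cdot w$ is unique and, by the paper's Lemma \ref{technical}(ii), negative, while its counterpart on $g_2$ is positive; hence your prescribed $x_0$ has an odd number of sign changes and lies outside $W(D_n)$, and the variants obtained by flipping the prescription to $e_a\mapsto -e_a\cdot w$ and/or $e_b\mapsto -e_b\cdot w$ either leave the parity odd or destroy the property $\alpha\cdot x_0\in\Phi^-$. The paper escapes by a third capture mechanism your proposal lacks: Lemma \ref{technical}(ii) provides inverters $\tau^-_{g_1,a}$, $\tau^-_{g_2,b}$ sending $e_a\mapsto -e_a$ and $e_b\mapsto -e_b$, each negative, so their product is positive and $\alpha\cdot x=-\alpha\in\Phi^-$. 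Without this construction, or an equivalent one, the case that makes type $D$ genuinely harder than types $A$ and $B$ --- the very reason this proposition needs its own proof --- remains unproved.
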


We work in the environment of $W(B_n)$, as we will be dividing $w$ into a product of
cycles, some of which may be negative. We write $\mathcal{I}^B_w$ for $\mathcal{I}_w$
when we are considering $w$ as an element of $W(B_n)$, and $\mathcal{I}^D_w =
\mathcal{I}^B_w \cap W(D_n)$. Let $W(B_w)$ be the Coxeter group of type $B$ over
$\supp(w)$ and define $\overline{\mathcal{I}}_w = \mathcal{I}^B_w \cap W(B_w)$. For $w
\in W(B_n)$, we write $\hat w$ for the corresponding element of $\sym(n)$. For example,
if $w = (\overset{+}{1}\; \overset{-}{2})$, then $w$ is an even, negative cycle and $\hat
w = (1 \; 2)$.
\\
For every $w \in W(B_n)$, $\mathcal{I}^B_w \neq \emptyset$. If $w$ is negative, then for
any $\tau \in \mathcal{I}^B_w$, exactly one of $\tau$ and $w\tau$ is negative, so
$\mathcal{I}^B_w$ contains both positive and negative elements. If $w$ is positive, then
$w \in W(D_n)$, and so $\mathcal{I}^D_w$ is non-empty,
whence $\mathcal{I}^B_w$ contains at least one positive element.\\

Before we can prove Proposition \ref{dn} we need the following lemma.

\begin{lemma}\label{technical} Let $g$ be an $m$-cycle of $W(B_n)$.
\begin{trivlist} \item{(i)} Suppose $m$ is odd and write $\hat g = (a_1a_2\cdots a_m)$. Then there
exists $\tau_g \in \overline{\mathcal{I}}_g$ such that $\tau_g$ is
positive and $\hat\tau_g = (a_2 a_m)(a_3 a_{m-1}) \cdots
(a_{(m+1)/2}
a_{(m+3)/2})$.\\
\item{(ii)} Let $a \in \supp(g)$. Then there exist $\tau_{g,a}^+
\in \overline{\mathcal{I}}_g$ and $\tau_{g,a}^- \in
\overline{\mathcal{I}}_g$ such that $(\overset{+}{a})$ is a cycle
of $\tau_{g,a}^+$ and $(\overset{-}{a})$ is a cycle of
$\tau_{g,a}^-$. Moreover, $\tau_{g,a}^+$ is negative if and only
if $m$ is even and $g$ is negative; $g\tau_{g,a}^+$ is negative if
and only if $m$ is odd and $g$ is negative; $\tau_{g,a}^-$ is
positive if and only if $m$ is even and $g$ is
positive.\end{trivlist}\end{lemma}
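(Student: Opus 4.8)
Throughout I would fix coordinates by writing the $m$-cycle as $g = (\overset{\eps_1}{c_1}\cdots\overset{\eps_m}{c_m})$, so that $e_{c_i}\cdot g = \eps_i e_{c_{i+1}}$ with indices read modulo $m$; then the sign type of $g$ is $\prod_{i=1}^m \eps_i$, which is $-1$ exactly when $g$ is a negative cycle. The plan is to describe all signed involutions supported on $\supp(g)$ that invert $g$ and carry a prescribed fixed point, and then read both statements off. If $\tau\in\overline{\mathcal{I}}_g$, its underlying permutation $\hat\tau$ is an involution of $\sym(\supp g)$ inverting the $m$-cycle $\hat g$, hence one of the dihedral reflections of the $m$-gon $(c_1\cdots c_m)$. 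To have $a$ as a fixed point of $\tau$ we must take $\hat\tau$ to be the reflection fixing $a$; for $a=c_1$ this is $\pi\colon c_i\mapsto c_{2-i}$, i.e.\ $\hat\tau=(c_2\,c_m)(c_3\,c_{m-1})\cdots$.

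Writing $e_{c_i}\cdot\tau = \theta_i e_{c_{\pi(i)}}$, the requirement $g\tau = \tau g^{-1}$ reduces to the recurrence $\theta_{i+1} = \theta_i\eps_i\eps_{\pi(i+1)}$, which determines every $\theta_i$ from the single value $\theta_1$ (the sign of $\tau$ at $a$) and closes up consistently around the cycle because $\prod_i \eps_i\eps_{\pi(i+1)} = (\prod_i\eps_i)^2 = 1$. So for each choice $\theta_1\in\{+1,-1\}$ there is exactly one signed lift of $\pi$ inverting $g$; I would set $\tau^+_{g,a}$ to be the lift with $\theta_1=+1$ and $\tau^-_{g,a}$ the lift with $\theta_1=-1$, giving $(\+{a})$, respectively $(\mi{a})$, as a cycle. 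To confirm these are genuine involutions, observe that $\tau^2$ has trivial underlying permutation and commutes with $g$; since $\hat g$ is transitive on $\supp(g)$, a pure sign change commuting with $g$ must negate either none or all of these coordinates, so $\tau^2\in\{1,w_0\}$, where $w_0$ negates all of $\supp(g)$. Evaluating at the fixed coordinate gives $\theta_1\theta_{\pi(1)}=\theta_1^2=1$, ruling out $w_0$ and forcing $\tau^2=1$; the same identity $\theta_i=\theta_{\pi(i)}$ shows every pair contributes a positive $2$-cycle $(\overset{\theta_i}{c_i}\,\overset{\theta_i}{c_{\pi(i)}})$. This settles existence and membership in $\overline{\mathcal{I}}_g$.

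For the positive/negative bookkeeping I would use that negativity is a homomorphism $W(B_n)\to\{\pm1\}$ and that, among the cycles of an involution, only the $(\mi{c})$ one-cycles are of negative type; hence $\tau$ is negative exactly when it has an odd number of negative fixed points. When $m$ is odd, $\pi$ has the single fixed point $a$, so $\tau^+_{g,a}$ is positive and $\tau^-_{g,a}$ is negative, and multiplicativity gives $\mathrm{neg}(g\tau^+_{g,a}) = \mathrm{neg}(g)$; all three ``moreover'' equivalences then reduce to their $m$-odd instances. In particular, specialising $c_i=a_i$, the element $\tau_g:=\tau^+_{g,a_1}$ is a positive member of $\overline{\mathcal{I}}_g$ with $\hat\tau_g=(a_2\,a_m)(a_3\,a_{m-1})\cdots$, which is exactly statement~(i); so (i) will drop out of the construction used for (ii).

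When $m$ is even the reflection $\pi$ fixes both $a=c_1$ and the antipode $c_{i_0}$ with $i_0=1+m/2$, so the negativity of $\tau^{\pm}_{g,a}$ is governed by the sign $\theta_{i_0}$ at the antipode. The one genuine computation, and the step I expect to be the main obstacle, is to evaluate $\theta_{i_0}$ by summing the recurrence: this should give $\theta_{i_0}=\theta_1(\eps_1\cdots\eps_{m/2})(\eps_{1+m/2}\cdots\eps_m)=\theta_1\prod_{i=1}^m\eps_i$, i.e.\ $\theta_1$ times the sign type of $g$. With $\theta_1=+1$ the antipodal fixed point is negative precisely when $g$ is negative, so $\tau^+_{g,a}$ is negative iff $g$ is negative; with $\theta_1=-1$ the sign $\theta_{i_0}$ flips, so both fixed points are negative (an even count) precisely when $g$ is positive, giving $\tau^-_{g,a}$ positive iff $g$ is positive. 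Finally $\mathrm{neg}(g\tau^+_{g,a})=\mathrm{neg}(g)\mathrm{neg}(\tau^+_{g,a})=\mathrm{neg}(g)^2=+1$ when $m$ is even, matching the claim that $g\tau^+_{g,a}$ is negative only in the $m$-odd case. Combining the two parities yields all three equivalences. The delicacy here is almost entirely in fixing the cycle-notation convention consistently and tracking indices modulo $m$ through the recurrence; once that is pinned down, the remaining verifications are routine.
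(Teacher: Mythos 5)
Your proof is correct, and it takes a genuinely different route from the paper's. The paper argues by transfer: for $m$ odd it starts from the non-emptiness of $\overline{\mathcal{I}}_g$, notes that the underlying permutation of any inverting involution must be a reflection of the $m$-gon, adjusts the labelling by conjugating with elements of the centralizer of $g$, and then passes between the two required elements by multiplying by the central involution $z$ of $W(B_g)$ (negative for $m$ odd, positive for $m$ even); for $m$ even it instead writes down an explicit inverting involution for the model cycles $h=(\+{a_1}\,\+{a_2}\cdots \+{a_m})$ and $(\mi{a_1})h$, transfers it to $g$ by conjugacy, and again uses $z$ to switch the sign of the fixed point. You classify directly: the underlying permutation of $\tau$ is forced to be the dihedral reflection $\pi$ fixing $a$, and the inversion condition is equivalent to the recurrence $\theta_{i+1}=\theta_i\eps_i\eps_{\pi(i+1)}$, which closes up consistently (the product of the multipliers is $(\prod_i\eps_i)^2=1$) and so has exactly one solution for each value of $\theta_1$; the involution property follows from transitivity of $\hat g$ together with evaluation at the fixed coordinate, and the sign-type assertions reduce to counting negative $1$-cycles, the only genuine computation being the antipodal sign $\theta_{1+m/2}=\theta_1\prod_i\eps_i$ in the even case. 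I checked the recurrence, its consistency, the involution argument, and the antipodal formula; all are right, and in fact your $\tau^-_{g,a}$ equals $z\tau^+_{g,a}$, so the two constructions produce the same elements. Your route buys a uniform treatment of both parities, self-contained existence (the paper has to quote non-emptiness of $\overline{\mathcal{I}}_g$), and the stronger conclusion that $\tau^{\pm}_{g,a}$ are the \emph{only} elements of $\overline{\mathcal{I}}_g$ fixing the coordinate $a$ up to sign; the paper's route avoids essentially all index bookkeeping by leaning on conjugacy and the sign homomorphism. One small caveat: the phrase ``hence one of the dihedral reflections of the $m$-gon'' tacitly assumes $m\geq 3$ (for $m\leq 2$ the underlying permutation can be the identity), but your recurrence argument goes through verbatim in those degenerate cases.
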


\begin{proof} We consider the cases $m$ odd and $m$ even
separately.\\

Suppose first that $m$ is odd. Since $\overline{\mathcal{I}}_g$ is
non-empty, there exists $\tau \in \overline{\mathcal{I}}_g$, and
$\hat\tau$ inverts $\hat g$ by conjugation. Hence $\hat\tau$ is of
the form $(a_2 a_m)(a_3 a_{m-1}) \cdots (a_{(m+1)/2} a_{(m+3)/2})$
for some labelling $(a_1 \cdots a_m)$ of $\hat g$. The $\tau$
resulting from any choice of labelling for $g$ are conjugate via an
element of the centralizer of $g$, and hence for any labelling $(a_1
a_2 \cdots a_m)$ of $\hat g$, there exists $\tau \in
\overline{\mathcal{I}}_g$ with $\hat\tau = (a_2 a_m)(a_3 a_{m-1})
\cdots (a_{(m+1)/2} a_{(m+3)/2})$. Let $z$ be the central involution
in $W(B_g)$, so $z = (\overset{-}{a_1})(\overset{-}{a_2})\cdots
(\overset{-}{a_m})$ and $z$ is negative. Exactly one of $\tau$ and
$z \tau$ is positive. Let $\tau_g$ be the positive one. Moreover,
setting $a_1 = a$, define $\tau^+_{g, a} = \tau_g$ and $\tau^-_{g,
a} = z\tau_g$. since $\tau_g$ is a positive involution containing
exactly one 1-cycle, the 1-cycle must be positive. Hence $\tau^+_{g,
a}$ contains $(\overset{+}{a})$ and is positive, and $\tau^-_{g, a}$
contains $(\overset{-}{a})$ and is negative. Finally $g\tau^+_{g,
a}$
is negative if and only if $g$ is negative. This establishes part (i), and (ii) for $m$ odd.\\

Now suppose that $m$ is even. Then $g$ is conjugate either to $h =
(\overset{+}{a_1} \; \overset{+}{a_2} \cdots \overset{+}{a_m})$ or
to $(\overset{-}{a_1} \; \overset{+}{a_2} \cdots \overset{+}{a_m}) =
(\overset{-}{a_1})h$. Let $\sigma =
(\overset{+}{a_1})(\overset{+}{a_{m/2}})(\overset{+}{a_2}
\overset{+}{a_m})(\overset{+}{a_3} \overset{+}{a_{m-1}}) \cdots
(\overset{+}{a_{m/2}} \overset{+}{a_{m/2+2}})$. Then $\sigma \in
\overline{\mathcal{I}}_h$, and hence $(\overset{-}{a_1})\sigma \in
\overline{\mathcal{I}}_{(\overset{-}{a_1})h}$. Therefore
$\overline{\mathcal{I}}_g$ contains an element $\tau$ with two
1-cycles, at least one of which is positive. By choice of labelling
of $g$, we can ensure that $(\overset{+}{a})$ is a 1-cycle of
$\tau$.  In addition, $\tau$ is positive if and only if $g$ is
positive. Let $z$ be the central involution in $W(B_g)$. Note that
because $m$ is even, $z$ is positive. Now set $\tau^+_{g, a} = \tau$
and $\tau^-_{g, a} = z\tau$. Then $\tau^+_{g, a}$ is negative if and
only if $g$ is negative, $g\tau^+_{g, a}$ is always positive, and
$\tau^-_{g, a}$ is positive if and only if $g$ is positive. This
establishes part (ii) for $m$ even, so completing the proof of the
lemma.
\end{proof}\\

We may now give the

\paragraph{Proof of Proposition \ref{dn}}
Suppose $\alpha = \pm e_a \pm e_b \in N(w)$. We must find a
positive $x \in \mathcal{I}^B_w$ such that $\alpha \in N(x)$.
There are two cases to consider, depending on whether $a$ and $b$
are
in the same or distinct cycles of $w$.\\

 We first consider the case
that $\{a, b\} \subseteq \supp(g)$ for some $m$-cycle $g$ of $w$.
Suppose $m$ is odd, then by Lemma \ref{technical} and judicious
choice of $a_1$, there exists $\tau_g \in \overline{\mathcal{I}}_g$
such that $(ab)$ is a 2-cycle of $\hat \tau_g$. Hence $ \alpha \cdot
\tau_g= \pm \alpha$. Therefore either $\alpha \in N(\tau_g)$ or
$\alpha \in N(\tau_gg)$. Replacing $\tau_g$ by $\tau_gg$ if
necessary, we have $\alpha \in N(\tau_g)$ and $\tau_g$ is positive
whenever $g$ is positive. Write $w = gw'$. If $\tau_g$ is positive,
then choose a positive $\sigma \in \overline{\mathcal{I}}_{w'}$. If
$\tau_g$ is negative, then $g$ is negative, forcing $w'$ to be
negative (recall that $w$ is positive), thus we may choose a
negative $\sigma \in \overline{\mathcal{I}}_{w'}$. Finally, let $x =
\tau_g\sigma$. Then $x \in \mathcal{I}^B_w$, $x$ is positive and
$\alpha\cdot x = -\alpha$. Hence $\alpha \in N(x)$. If $m$ is even,
just pick any positive $\sigma \in \mathcal{I}^B_w$. Let $z$ be the
central involution in $W(B_g)$. Since $m$ is even, $z$ is positive.
Then $z\sigma \in \mathcal{I}_w$ and $\alpha \in N(\sigma) \cup
N(z\sigma) \subseteq \mathcal{I}_w$.\\

We must now consider the case where $a$ and $b$ appear in different
cycles of $w$. So assume $a \in \supp(g_1)$, where $g_1$ has length
$m_1$, and $b \in \supp(g_2)$, where $g_2$ has length $m_2$. Write
$w = g_1g_2w'$. Let $\tau = \tau^+_{g_1, a}g_1\tau^+_{g_2,b}g_2$.
Note that $\alpha\cdot \tau = \alpha\cdot w$. If $\tau$ is positive,
then let $\sigma$ be any positive element of
$\overline{\mathcal{I}}_{w'}$. If $\tau$ is negative and $w'$ is
negative, then let $\sigma$ be any negative element of
$\overline{\mathcal{I}}_{w'}$. Set $x = \tau\sigma$. Then $x$ is a
positive element of $\mathcal{I}^B_w$, and $\alpha \in N(x)$. The
only case not covered is where $\tau$ is negative and $w'$ is
positive. Hence $g_1g_2$ is positive. Without loss of generality, we
must have $\tau^+_{g_1, a}g_1$ negative and $\tau^+_{g_2, b}g_2$
positive, which means $m_1$ is odd, and $g_1$ is negative; hence
$m_2$ is even and $g_2$ is negative. In this case we let $\sigma$ be
any positive element of $\overline{\mathcal{I}}_{w'}$ and set $x =
\tau^-_{g_1, a}\tau^-_{g_2, b}\sigma$. We see that $x$ is a positive
element of $\mathcal{I}^B_w$. Furthermore $\alpha\cdot x = -\alpha$
and hence $\alpha \in N(x)$. For each possibility we have found a
positive $x \in \mathcal{I}^B_w$ such that $\alpha \in N(x)$, so the
result holds. \qed

\begin{prop} \label{simplylaced} Suppose $W$ is finite with trivial centre. Then for
all $w \in W, N(w) \subseteq N(\mathcal{I}_w)$.
\end{prop}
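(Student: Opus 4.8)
The plan is to prove Proposition \ref{simplylaced} by reducing, via Lemma \ref{basics}(i), to the case where $W$ is irreducible. The key observation is that the statement $N(w) \subseteq N(\mathcal{I}_w)$ has already been established for several families, and the hypothesis of trivial centre allows us to rule out precisely those irreducible types where the direct verification would otherwise be needed.

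First I would invoke Lemma \ref{basics}(i) in spirit: since $N(w)$ and $\mathcal{I}_w$ decompose according to the irreducible direct factors of $W$ (an involution inverting $w$ may be taken componentwise, and roots of one factor are unaffected by the others), it suffices to prove the result when $W$ is irreducible with trivial centre. I would then consult the classification in Theorem \ref{classification}. The irreducible finite Coxeter groups with trivial centre are exactly those not containing the longest element $w_0$ as a central involution; concretely these are the types $A_{n-1}$ (for $n \geq 2$), $D_n$ with $n$ odd, $E_6$, $\dih(2m)$ with $m$ odd, and $H$-type groups are excluded since $H_3, H_4$ have central $w_0$. The types with non-trivial centre, namely $B_n$, $D_n$ with $n$ even, $E_7$, $E_8$, $F_4$, $H_3$, $H_4$, and $\dih(2m)$ with $m$ even, are all handled by Lemma \ref{centre}, which already gives the stronger conclusion $N(\mathcal{I}_w) = \Phi^+ \supseteq N(w)$.

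Thus the proof amounts to checking the remaining centreless irreducible types. Type $A_{n-1}$ is precisely Lemma \ref{an}, and type $D_n$ (for any $n$, in particular $n$ odd) is precisely Proposition \ref{dn}. For the exceptional centreless type $E_6$ and the dihedral groups $\dih(2m)$ with $m$ odd, I would appeal to the remaining cases. The dihedral case should be straightforward to argue by hand, since $\mathcal{I}_w$ for any rotation $w$ in a dihedral group consists of all the reflections inverting it, and for $m$ odd one checks directly that these reflections collectively send every positive root negative that is moved by $w$. For $E_6$, a finite computation (in the style of the \textsc{Magma} verification used for Theorem \ref{parabolicexcess}) settles the inclusion.

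The main obstacle is not any single deep argument but rather ensuring the case division by centre is exhaustive and correctly aligned with the classification; the substance has been front-loaded into Lemmas \ref{centre}, \ref{an} and Proposition \ref{dn}. The one genuinely new piece of work is disposing of $E_6$ and the odd dihedral groups, and of these the $E_6$ case is the only one requiring computational verification rather than a short direct argument. I expect the whole proof to be short, essentially a bookkeeping argument over Theorem \ref{classification} that routes each irreducible type to an already-proved statement.
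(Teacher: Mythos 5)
Your proposal is correct and follows essentially the same route as the paper: reduce to irreducible factors (all of which inherit trivial centre), then dispatch type $A$ via Lemma \ref{an}, type $D$ via Proposition \ref{dn}, $E_6$ by a {\sc Magma} check, and the odd dihedral groups by a short direct argument (the paper phrases this last case as reflection classes plus cuspidal classes, the latter covered by Proposition \ref{cusplen}, which is the same observation as yours that all reflections invert any rotation). Your extra remarks about the non-centreless types and Lemma \ref{centre} are not needed for this proposition, since the trivial-centre hypothesis already excludes those factors; they belong to the deduction of Theorem \ref{nwni}.
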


\begin{proof} It is enough to consider the finite irreducible Coxeter groups $W$
with trivial centre. These are precisely the Coxeter groups of
type $A_n$, $n\geq 1$, $D_n$, $n > 4$ and $n$ odd, $E_6$ and
$\dih(2m)$ for $m$ odd. In $\dih(2m)$ the non-trivial conjugacy
classes are either classes of reflections or cuspidal classes. In
either case the result is trivially true. For $E_6$ the result was
checked with a computer using {\sc Magma}\cite{magma}. Types $A_n$
and $D_n$ have been dealt with in
Lemma \ref{an} and Proposition \ref{dn} respectively.\end{proof}\\

Theorem \ref{nwni} is an immediate consequence of Lemma \ref{centre}
and Proposition \ref{simplylaced}.\\

Theorem \ref{nwni} raises the question as to whether, for $w \in W$,
$N(w) \subseteq N(x)$ for some $x \in \mathcal{I}_w$. However we do
not have to look very far before alighting upon the following
example. Choose $W$ to be the Coxeter group $W(A_4)\cong \sym(5)$
and let $w = (235)$. Now $N(w) = \{e_2 - e_5, e_3 - e_4, e_3 - e_5,
e_4 - e_5\}$. Also $\mathcal{I}_w = \{(23), (35), (25), (14)(23),
(14)(35), (14)(25)\}$ and we have
\begin{center}\begin{tabular}{ll}
$x \in \mathcal{I}_w$ & $N(x)$ \\
\hline %
$(23)$& $\{ e_2-e_3\}$\\
$(35)$& $\{ e_3-e_4, e_3-e_5, e_4-e_5\}$\\
$(25)$& $\{ e_2-e_3, e_2-e_4, e_2-e_5, e_3-e_5, e_4-e_5\}$\\
$(14)(23)$& $\{ e_1-e_2, e_1-e_3, e_1-e_4, e_2-e_3, e_2-e_4, e_3-e_4\}$\\
$(14)(35)$& $\{ e_1-e_2, e_1-e_3, e_1-e_4, e_2-e_4, e_3-e_4, e_3-e_5\}$\\
$(14)(25)$& $\{ e_1-e_3, e_1-e_4, e_1-e_5, e_2-e_3, e_2-e_4,
e_2-e_5, e_3-e_4, e_3-e_5\}$.
\end{tabular}\end{center}%
From the above we observe that for each $x \in \mathcal{I}_w$,
$N(w) \not\subseteq N(x)$.

\end{document}